\def\thefigure{\thesection.\@arabic\c@figure}
\def\fps@figure{h,t}
\def\thetable{\thesection.\@arabic\c@table}
\def\fps@table{h, t}
\newcommand\@dotsep{4.5}
\def\@tocline#1#2#3#4#5#6#7{\relax
	\ifnum #1>\c@tocdepth 
	\else
	\par \addpenalty\@secpenalty\addvspace{#2}%
	\begingroup \hyphenpenalty\@M
	\@ifempty{#4}{%
		\@tempdima\csname r@tocindent\number#1\endcsname\relax
	}{%
		\@tempdima#4\relax
	}%
	\parindent\z@ \leftskip#3\relax \advance\leftskip\@tempdima\relax
	\rightskip\@pnumwidth plus1em \parfillskip-\@pnumwidth
	#5\leavevmode\hskip-\@tempdima #6\relax
	\leaders\hbox{$\m@th
		\mkern \@dotsep mu\hbox{.}\mkern \@dotsep mu$}\hfill
	\hbox to\@pnumwidth{\@tocpagenum{#7}}\par
	\nobreak
	\endgroup
	\fi}
\theoremstyle{plain}
\newtheorem{theorem}{Theorem}
\newtheorem*{theorem*}{Theorem}
\newtheorem{corollary}[theorem]{Corollary}
\newtheorem{example}[theorem]{Example}
\newtheorem{lemma}[theorem]{Lemma}
\newtheorem{proposition}[theorem]{Proposition}
\newtheorem{remark}[theorem]{Remark}
\numberwithin{theorem}{section}
\numberwithin{equation}{section}
\newcommand{\0}{{\bf 0}}
\renewcommand{\1}{{\bf 1}}
\newcommand{\Ad}{{\rm Ad}}
\newcommand{\ad}{{\rm ad}}
\newcommand{\Ci}{{\mathcal C}^\infty}
\newcommand{\Aut}{{\rm Aut}}
\newcommand{\Bun}{\text{{\boldmath{$\mathfrak{B}$}}}}
\newcommand{\Der}{{\rm Der}}
\newcommand{\Cl}{{\rm Cl}}
\newcommand{\de}{{\rm d}}
\newcommand{\ee}{{\rm e}}
\newcommand{\End}{{\rm End}}
\newcommand{\ev}{{\rm ev}}
\newcommand{\Galg}{G^{\rm alg} }
\newcommand{\ggalg}{\gg^{\rm alg}}
\newcommand{\GL}{{\rm GL}}
\newcommand{\Hom}{{\rm Hom}}
\newcommand{\I}{{\rm I}}
\renewcommand{\Im}{{\rm Im}}
\newcommand{\Ind}{{\rm Ind}}
\newcommand{\id}{{\rm id}}
\newcommand{\ie}{{\rm i}}
\newcommand{\kk}{{\mathbf k}}
\newcommand{\Ker}{{\rm Ker}\,}
\newcommand{\nor}{{\rm nor}}
\renewcommand{\O}{{\mathbf{O}}}
\newcommand{\Prim}{{\rm Prim}}
\newcommand{\pr}{{\rm pr}}
\newcommand{\Ran}{{\rm Ran}\,}
\renewcommand{\Re}{{\rm Re}}
\newcommand{\Rel}{{\boldmath{$\mathfrak{R}$}}}
\newcommand{\RelS}{\text{{\boldmath{$\mathcal{S}$}}}}
\newcommand{\rk}{{\rm rk}}
\newcommand{\spa}{{\rm span}\,}
\newcommand{\supp}{{\rm supp}\,}
\newcommand{\CC}{{\mathbb C}}
\newcommand{\NN}{{\mathbb N}}
\newcommand{\QQ}{{\mathbb Q}}
\newcommand{\RR}{{\mathbb R}}
\newcommand{\TT}{{\mathbb T}}
\newcommand{\ZZ}{{\mathbb Z}}
\newcommand{\Bc}{{\mathcal B}}
\newcommand{\Cc}{{\mathcal C}}
\newcommand{\Fc}{{\mathcal F}}
\newcommand{\Hc}{{\mathcal H}}
\newcommand{\Jc}{{\mathcal J}}
\newcommand{\Lc}{{\mathcal L}}
\newcommand{\Oc}{{\mathcal O}}
\newcommand{\Pc}{{\mathcal P}}
\newcommand{\Vc}{{\mathcal V}}
\newcommand{\ag}{{\mathfrak a}}
\newcommand{\dg}{{\mathfrak d}}
\renewcommand{\gg}{{\mathfrak g}}
\newcommand{\hg}{{\mathfrak h}}
\newcommand{\kg}{{\mathfrak k}}
\renewcommand{\ng}{{\mathfrak n}}
\newcommand{\zg}{{\mathfrak z}}
\newcommand{\matt}[2]
{\ensuremath{\begin{pmatrix}
			{#1} & 0 \\
			   0 & {#2}
		\end{pmatrix}}}
\newcommand{\mattt}[3]
{\ensuremath{\begin{pmatrix}
			{#1} & & 0\\
			& {#2} & \\
			0& & {#3}
\end{pmatrix}}}
\newcommand{\mathsout}[1]
{\bgroup\mathchoice
	{\sbox0{$\displaystyle{#1}$}%
		\usebox0\hspace{-\wd0}%
		\rule[0.5\ht0-0.5\dp0-.5pt]{\wd0}{1pt}}%
	{\sbox0{$\textstyle{#1}$}%
		\usebox0\hspace{-\wd0}%
		\rule[0.5\ht0-0.5\dp0-.5pt]{\wd0}{1pt}}%
	{\sbox0{$\scriptstyle{#1}$}%
		\usebox0\hspace{-\wd0}%
		\rule[0.5\ht0-0.5\dp0-.5pt]{\wd0}{1pt}}%
	{\sbox0{$\scriptscriptstyle{#1}$}%
		\usebox0\hspace{-\wd0}%
		\rule[0.5\ht0-0.5\dp0-.5pt]{\wd0}{1pt}}%
	\egroup}
\title[Square-integrable representations of solvable Lie groups]
{Square-integrable representations and the coadjoint action of solvable Lie groups}
\author{Ingrid Belti\c t\u a}
\author{Daniel Belti\c t\u a}
\address{Institute of Mathematics ``Simion Stoilow'' of the Romanian Academy,
P.O. Box 1-764, Bucharest, Romania}
\email{Ingrid.Beltita@imar.ro, ingrid.beltita@gmail.com}
\email{Daniel.Beltita@imar.ro, beltita@gmail.com}
\keywords{solvable Lie group; quasi-orbit; factor representation}
\subjclass[2020]{Primary 22E27; Secondary 22D25, 22E25, 17B30}
\thanks{The research of the second-named author was supported by a grant of the  Ministry of Research, Innovation and Digitization, CNCS/CCCDI -- UEFISCDI, project number PN-III-P4-ID-PCE-2020-0878, within PNCDI III}
\begin{document}

\begin{abstract}
We characterize the square-integrable representations of  
(con\-nect\-ed, simply connected) solvable Lie groups in terms of
the generalized orbits of the coadjoint action.
We prove that the normal representations corresponding, via the Puk\'anszky correspondence, 
to open coadjoint orbits are type I, not necessarily square-integrable representations. 
 We show that the quasi-equivalence classes of type~$\I$
square-integrable  representations are in bijection with the simply 
connected open coadjoint orbits, and the existence of an open coadjoint orbit guarantees the existence of a
compact open subset of the space of primitive ideals of the group.
When the nilradical has codimension~1, we prove that the isolated 
points of the primitive ideal space are
always of type~$\I$. 
This is not always true for codimension greater than 2, as shown by specific examples of solvable Lie groups that have dense, but not locally closed,  coadjoint orbits. 
\end{abstract}

\maketitle


\section{Introduction}
\label{sect1}

There has been continued interest in the property of admissibility for unitary representations of Lie groups, which requires in particular square-integrable coefficients of the representation under consideration. 
Several interesting discretized versions of square integrability properties of regular or quasi-regular representations of groups were established under the guise of square summability properties via existence of frames; see for instance \cite{FO22}, \cite{FvV21}, and \cite{Bk04}.  

We recall that when $G$ is a semisimple Lie group, its discrete series representations (i.e.,  square-integrable irreducible representations) were parameterized by Harish-Chandra, and were intensely studied ever since.
Likewise, the Kirillov-Bernat map shows that square-integrable representations of exponential solvable Lie groups correspond to open coadjoint orbits. 
However, less is known about the relation between square integrable representations and the coadjoint action in the case of general solvable Lie groups, and in this paper we contribute to the picture in this area of representation theory. 

It is desirable to have a characterization of the square-integrable representations in terms of geometric objects that are intrinsically associated to the group under investigation. 
We address this problem for solvable Lie groups that are connected and simply connected, but otherwise arbitrary, and in particular are not required to be type~$\I$.

The earlier research mainly focused on Lie groups of type~$\I$, and often related integrability 
properties of representations to some kind of open orbits 
(see e.g., \cite{DuRa76} or, more recently, 
\cite{CFT16}). 
As a consequence of our results, for a connected and simply connected solvable Lie group $G$, we give a complete characterization of the square-integrable irreducible representations in terms of coadjoint orbits, namely we show that
there is a one-to-one correspondence 
between the equivalence classes of square-integrable unitary irreducible representations of $G$  and 
the simply connected open coadjoint orbits of $G$.
(See Corollary~\ref{open_cor2}.) 
We emphasize that the group $G$ itself  is not assumed to be type~$\I$. 
Even in the case of solvable Lie groups of type~$\I$  the correspondence between 
the space of coadjoint orbits and the equivalence classes of unitary irreducible 
representations is not bijective 
(see \cite{AuKo71}), let alone a homeomorphism, so there is no direct way to translate the topological properties of the coadjoint orbits into topological properties of the primitive ideal space of the group $C^*$-algebra $C^*(G)$, or into integrability properties of the group representations.

In this paper, by square-integrable representation of 
a locally compact group~$G$ we mean a (continuous unitary) factor representation $\pi\colon G\to\Bc(\Hc)$ for which there exist vectors $f,h\in\Hc$ whose corresponding coefficient $(\pi(\cdot)f\mid h)$ of the representation~$\pi$ is square integrable on~$G$ and not identically~$0$, or equivalently, $\pi$ is  quasi-equivalent with a subrepresentation of the  regular representation (\cite[Prop.~2.3]{Ros78}, and 
 \cite[Th. 3]{Mo77} for alternative characterizations).

In order to describe the contents of the present paper in more detail let us introduce some notation. 
We denote by $\stackrel{\frown}{G}$ the set of all quasi-equivalence classes $[\pi]^\frown$ of factor unitary representations $\pi$ of a locally compact group $G$, and 
we recall from above that 
the property of square integrability is invariant under quasi-equivalence of representations.  
It thus makes sense to speak about square-integrable classes  $[\pi]^\frown\in\stackrel{\frown}{G}$. 
Let  $\stackrel{\frown}{G}_\nor\subseteq \stackrel{\frown}{G}$ be the subset corresponding to normal representations, that is, unitary representations $\pi\colon G\to\Bc(\Hc_\pi)$ for which the von Neumann algebra $\pi(G)''\subseteq \Bc(\Hc_\pi)$ is a factor with a semifinite normal faithful trace $\tau$ satisfying $0<\tau(\pi(a^*a))<\infty$ for some element $a\in C^*(G)$.

Assume that $G$ is a connected and simply connected solvable Lie group with its corresponding  
set $\RelS$ of generalized orbits of the coadjoint action  
(cf. \cite{Pu71}; see Section~\ref{Prel} below). 
The remarkable result of Puk\'anszky (see \cite[Th. 3]{Pu74}) shows that there is a 
bijective map  
$\ell\colon \RelS\to \stackrel{\frown}{G}_\nor$, now called
 the \emph{Puk\'anszky correspondence};
it recovers the Kirillov correspondence in the special case of nilpotent Lie groups. 
We note that $\ell$ is not a homeomorphism in general, 
since no appropriate topology has been identified for the entire space 
$\RelS$.

One of our main results (Theorem~\ref{open_th}) answers the natural question of describing the generalized orbits of the coadjoint action that correspond to the square-integrable classes in $\stackrel{\frown}{G}_\nor$ via the Puk\'anszky correspondence. 
The first result in this connection appeared in \cite{Ros78}, cf. Lemma~\ref{open_lemma} below; however, that description is not intrinsic, 
as the characterization is in terms of orbits of a non-canonical group~$\Galg$. 
We show that all the normal representations associated to an open coadjoint orbit are type~$\I$ (Theorem~\ref{open_cor}) 
and characterize the type-$\I$ property of square-integrable representations in terms of coadjoint orbits (Theorem~\ref{F_gen} and Corollary~\ref{open_cor2}). 
We provide some information on the topological properties of
the Puk\'anszky correspondence for (connected, simply connected) solvable Lie groups $G$. 
Namely, we  prove that to every open coadjoint orbit $\Oc$ of $G$ there corresponds a compact open subset of $\Prim(G)$
which is homeomorphic to a torus which is dual to the fundamental group of $\Oc$,
in the sense of the duality theory of locally compact abelian groups (see Theorem~\ref{homeo}.)

In Theorem~\ref{codim1_th}, we improve a result from the earlier literature \cite[Th. 4.5]{KT96}. 
Specifically, we prove that if $G$ is a solvable Lie group whose nilradical is 1-codimensional, then the set
of 
the isolated points of  the primitive ideal spectrum $\Prim(G)$ of the group $C^*$-algebra $C^*(G)$
coincides with the set of 
the kernels in $C^*(G)$ of the square-integrable unitary irreducible representations of $G$.
We recall from \cite{Gr80} that the isolated points of $\Prim(G)$ are precisely the kernels of the square-integrable factor representations of $G$. 
Our result says that, under the additional hypothesis on the nilradical, every square-integrable factor representation of $G$ is 
type~$\I$, hence, quasi-equivalent to an irreducible representation. 
We prove by example in Section~\ref{sect7} that the hypothesis on the nilradical cannot be omitted. 

The structure of this paper is as follows: In Section~\ref{Prel} we collect some basic facts on the Puk\'anszky correspondence between the generalized orbits of the coadjoint action  and the quasi-equivalence classes of normal representations of solvable Lie groups. We also show that the elements of the open coadjoint 
orbits are always in general position, in the sense that their coadjoint orbits have maximal dimension, 
and the number of open simply connected coadjoint orbits is even. 
Section~\ref{sect3} contains the intrinsic characterization of square-integrable factor representations in terms of the generalized orbits of the coadjoint action. 
In Section~\ref{sect4}  we show that there are no clopen points in the primitive ideal space of a unimodular  (connected, simply connected) solvable Lie group, therefore these groups do not have square integrable representations. 
In Section~\ref{sect5} 
we discuss the type-$\I$ property of representations corresponding to open coadjoint orbits (Theorem~\ref{open_cor})
and show that there is a bijective correspondence between  the type~$\I$ square-integrable representations 
and the simply connected open coadjoint orbits of the group. 
In addition, we prove that the existence of an open coadjoint orbit implies the existence of a compact open subset of the primitive ideal space. 
(Corollary~\ref{open_cor2}).  
We show  in Section~\ref{sect6} that this property is shared by all square-integrable factor representations of solvable Lie groups whose  nilradical has codimension~1 (Theorem~\ref{codim1_th}).  
The examples in Section~\ref{sect7} prove that this is not the case when the nilradical has codimension at least $3$.

\subsection*{General notation}
For every topological group $K$ we denote by $K_\1$ the connected component of the unit element $\1\in K$.

The 1-connected (that is, connected and simply connected)  Lie groups are denoted by upper case Roman letters and their Lie algebras by the corresponding
lower case Gothic letters. 
We are mainly interested in solvable/nilpotent Lie groups that are
1-connected, and the exceptions will be particularly emphasized.  
Every connected closed subgroup of a 1-connected solvable Lie group is 1-connected by \cite[Prop. 11.2.15]{HN12}.

An exponential Lie group is a Lie group $G$ whose exponential map
$\exp_G \colon\gg \to G$ is bijective. 
All exponential Lie groups are solvable. 
See for instance \cite{ArCu20} for more details. 

For any Lie algebra $\gg$ with its linear dual space $\gg^*$ we denote by $\langle\cdot,\cdot\rangle\colon\gg^*\times\gg\to\RR$ the corresponding duality pairing. 
We often denote the group actions either by $\cdot$ or by juxtaposition,  in particular for the coadjoint action $G\times\gg^*\to\gg^*$, $(g,\xi)\mapsto g\xi$.

\section{Preliminaries on generalized orbits of the coadjoint action}
\label{Prel}

\subsection{Puk\'anszky theory for solvable Lie groups}
We collect here a few elements of the Puk\'anszky theory on representations of solvable Lie groups that are needed later on, and we include precise references to Puk\'anszky's original papers 
for proofs. 
(See also \cite{BB21b}.)

Unless otherwise mentioned, $G$ denotes a 1-connected solvable Lie group with its Lie algebra $\gg$.
The Lie algebra of the connected normal subgroup 
$D:=[G,G]$ 
of $G$ is 
the derived ideal $\dg:=[\gg,\gg]$. 
We denote by $\iota\colon\dg\hookrightarrow\gg$ the inclusion map, 
and let $\iota^*\colon\gg^*\to\dg^*$, $\xi\mapsto\xi\vert_\dg$ be its dual map.

We fix a 1-connected solvable Lie group $\Galg$ with its Lie algebra $\ggalg$ for which $G\subseteq \Galg$ is a closed subgroup, $[\gg,\gg]=[\ggalg,\ggalg]$, and $\ggalg$ is isomorphic to an algebraic Lie algebra. 
Note that such a group always exists, but it is not unique and not canonical.
For instance, $\Galg\times\mathbb{R}^k$ shares all the above properties of $\Galg$ for arbitrary $k\ge1$.
See \cite[page 521]{Pu71}, where $\Galg$ is denoted by $\widetilde{G}$. 

Let \Rel\  be 
the equivalence relation on $\gg^*$ defined  by 
 $$\xi \text{ is \Rel-equivalent to $\eta$  } \iff \overline{G\xi}=\overline{G\eta}, $$
 where  $\xi,\eta\in\gg^*.$
The equivalence classes of \Rel\ are called \emph{quasi-orbits} of the coadjoint action and their set is denoted by $(\gg^*/G)^\sim$. 
The corresponding quasi-orbit map is 
$$r\colon \gg^*\to(\gg^*/G)^\sim, \quad r(\xi)=\{\eta\in\gg^*\mid \overline{G\xi}=\overline{G\eta}\}.$$
Then
\begin{equation}\label{quasi_eq1}
G\xi\subseteq r(\xi)\subseteq\overline{G\xi}\text{ for all }\xi\in\gg^*.
\end{equation}
In fact \Rel\ is the only equivalence relation on~$\gg^*$ for which each equivalence class is locally closed, $G$-invariant, and contained in the closure of the coadjoint orbit of each of its elements. 
Moreover, if  $\Oc\in(\gg^*/G)^\sim$, then 
there exists a connected closed subgroup~$G_1$ of $\Galg$ with  $G\subseteq G_1\subseteq \Galg$, 
such that 
\begin{equation}
\label{open_qo_proof_eq1}
\Oc=G_1\xi=\overline{G\xi}\cap\Galg\xi.
\end{equation}
for every $\xi\in\Oc$. 
Here we note that, since $[\gg,\gg]=[\ggalg,\ggalg]$ and 
$G\subseteq G_1\subseteq \Galg$ are connected closed subgroups, 
we have $[\Galg,\Galg]=[G_1,G_1]=[G,G]=D\subseteq G$. 
Therefore $G$ is a normal subgroup of $G_1$, and then the restriction map $\gg_1^*\to\gg^*$ 
intertwines the coadjoint action of $G_1$ with  a natural 
smooth action of $G_1$ on $\gg^*$, which is involved in the equality $\Oc=G_1\xi$ in~\eqref{open_qo_proof_eq1}. 
Since the Lie group action $G_1\times\gg^*\to\gg^*$ is smooth, we see that $\Oc$ is endowed with a $G_1$-homogeneous space structure whose inclusion map into $\gg^*$ is an immersion, 
cf. \cite[Prop. 10.1.14]{HN12}. 
Thus, every coadjoint quasi-orbit of $G$ has the structure of a smooth manifold immersed into $\gg^*$. 
(See \cite[Lemma 1]{Pu86}
  and its proof for more details.)
We also note that 
  \begin{equation*}
	\Oc \in\gg^*/G \, \iff \, 	G\xi\subseteq \gg^* 
	\text{ is locally closed for some }\xi \in \Oc.
  \end{equation*}

\subsection*{Torus bundles over coadjoint quasi-orbits}
We recall the following notation, cf. \cite[pages 491--492]{Pu71}.
Let $\xi\in\gg^*$ be arbitrary fixed.
\begin{itemize}
\item 	$G(\xi):=\{g\in G\mid g\xi=\xi\}$ is the stabilizer at $\xi$ of the coadjoint action,  and
	 $G(\xi)_\1$ denotes the connected component of $\1\in G(\xi)$.
	The Lie algebra of $G(\xi)$ is 
	$\gg(\xi):=\{x\in\gg\mid(\forall y\in\gg)\  \langle\xi,[x,y]\rangle=0\}$,
	\item Let $\chi_\xi\colon G(\xi)_\1\to\TT$ be the character defined by
	$\chi_\xi(\exp_G x)=\ee^{\ie \langle\xi,x\rangle}$. 
	The reduced stabilizer at $\xi$ is 
	$$\overline{G}(\xi):=\{g\in G(\xi)\mid (\forall h\in G(\xi))\ ghg^{-1}h^{-1}\in\Ker\chi_\xi\}.$$
	\end{itemize}
	By \cite[Cor.~4.1 and proof of Lemma~4.1, page 492]{Pu71}, 
	\begin{equation}\label{fin-gen}
G(\xi)/G(\xi)_\1 \; \; \text{is a finitely generated free abelian group},
\end{equation}
	hence $\overline{G}(\xi)/G(\xi)_1$ is a finitely generated free abelian group, as well.
	There is an integer $\rk(\xi)\in\NN$  such that  $\overline{G}(\xi)/G(\xi)_\1\simeq\ZZ^{\rk(\xi)}$
	as groups. 
Actually, for every quasi-orbit $\Oc\in(\gg^*/G)^\sim$  and $\xi \in \Oc$, the number  $\rk(\xi)$
 depends  on $\Oc$ only, so we may sometimes write
$\rk(\Oc)=\rk(\xi)$,  $\xi\in\Oc$.
	Denote $$\mathop{G}\limits^{\mathsout{\wedge}}(\xi):=
	\{\chi\in\Hom(\overline{G}(\xi),\TT)\mid \chi\vert_{G(\xi)_\1}=\chi_\xi \}.$$
The definition of $\overline{G}(\xi)$ shows that 
\begin{equation}\label{prel1}
[G(\xi), \overline{G}(\xi)]\subseteq \Ker \chi_\xi\subseteq G(\xi)_1\subseteq  \overline{G}(\xi).
\end{equation}
When $G(\xi)$ is abelian,  $\overline{G}(\xi)= G(\xi)$.

The set 
$$\Bun(\gg^*):=\bigsqcup_{\xi\in\gg^*}\{\xi\}\times \mathop{G}\limits^{\mathsout{\wedge}}(\xi) 
$$
carries the natural action 
$$G\times \Bun(\gg^*)\to\Bun(\gg^*), \quad (g,(\xi,\chi))\mapsto (g\xi,g\chi).$$ 
Indeed, 
$g^{-1}\overline{G}(g\xi) g= \overline{G}(\xi)$  for every  $\xi\in\gg^*$ and $g\in G$.
Hence, for every
$\chi\in \mathop{G}\limits^{\mathsout{\wedge}}(\xi)$  
there is a character 
 $g\chi\in \mathop{G}\limits^{\mathsout{\wedge}}(g\xi)$ 
defined by  $(g\chi)(h)=\chi(g^{-1}hg)$ for all $h\in \overline{G}(g\xi)$.

\begin{lemma}\label{extra-lemma}
For every $\xi \in \gg^*$, $g \in G(\xi)$, and $\chi \in \mathop{G}\limits^{\mathsout{\wedge}}(\xi)$ we have
\begin{equation}\label{invar}
g \chi =\chi.
\end{equation}
\end{lemma}

\begin{proof}
By \eqref{prel1}, 
the action of $g \in G(\xi)$ leaves $\mathop{G}\limits^{\mathsout{\wedge}}(\xi)$
invariant.
For $h \in \overline{G}(\xi)$ we write 
$$(g\chi)(h)=\chi(g^{-1}hg)=\chi (g^{-1}hgh^{-1})\chi(h).$$
Here $\chi (g^{-1}hgh^{-1})=1$ since $g\in G(\xi)$ and  $h\in \overline{G}(\xi)$,
hence $(g\chi)(h)=\chi(h)$.
\end{proof}

The group $X(G)\simeq \widehat{G/D}$ acts
on $\Bun(\gg^*)$ by $X(G) \times \Bun(\gg^*) \to \Bun (\gg^*)$, $ (\varphi, p)\mapsto \varphi p$, 
where 
\begin{equation*}
 \varphi p= (\sigma+\xi, \varphi\vert_{\overline{G}(\xi)}\chi)
 \end{equation*}
for all  
$p=(\xi, \chi) \in \Bun(\gg^*)$, $\varphi \in X(G)$, and where $\sigma\in \dg^\perp$ is defined by
$\ie \sigma = d\varphi$.

For a subset $\Xi\subseteq\gg^*$ we denote  
$$\Bun(\Xi):=\tau^{-1}(\Xi)=\bigsqcup_{\xi\in\Xi}\{\xi\}\times \mathop{G}\limits^{\mathsout{\wedge}}(\xi), 
$$
where 
$$\tau\colon \Bun(\gg^*)\to\gg^*, \quad \tau(\xi,\chi):=\xi.$$ 
For every coadjoint quasi-orbit $\Oc\in(\gg^*/G)^\sim$ of $G$
there exists a $G$-equivariant bijection 
$\Bun(\Oc)\to \Oc\times\TT^{\rk(\Oc)}$ such that the diagram 
\begin{equation}\label{alpha}
\xymatrix{
\Bun(\Oc) \ar[dr]_{\tau\vert_{\Bun(\Oc)}} \ar[rr] &  &
\Oc\times\TT^{\rk(\Oc)} \ar[dl]^{\pr_1}\\ 
& 
\Oc &
}
\end{equation}
is commutative, 
where $\pr_1$ stands for the Cartesian projection onto the first factor. 
(See \cite[Subsect. 6.3, page 537]{Pu71}.)
Then  $\Bun(\Oc)$ is endowed with the smooth manifold structure transported from 
$\Oc\times\TT^{\rk(\Oc)}$ via the above bijection. 

For later use, we sketch an equivalent description of the smooth structure of $\Bun(\Oc)$ given in 
\cite[end of \S III.1(3), page 825]{Pu86}.
Define $K:=D\overline{G}(\xi)$; this a closed subgroup of $G$, independent on $\xi\in\Oc$, 
and the connected component of $\1\in K$ is $K_\1=DG(\xi)_\1$. 
Let $q\colon K\to K/K_\1=:\Pi$ be the corresponding quotient map. 
Then $\Pi$ is a finitely generated free abelian group, 
isomorphic with $\overline{G}(\xi)/G(\xi)_\1$. 
The group 
$\widetilde{\Pi}:=\{\phi\in \Hom(K,\TT)\mid \phi\vert_{K_1} \equiv 1\}\simeq \Hom(\Pi,\TT)$  is the structural group of the principal bundle $\tau\vert_{\Bun(\Oc)}\colon \Bun(\Oc)\to\Oc$. 
For every $a\in \Pi$, there is a smooth mapping $b\colon \Oc \to K$ such that $b(\xi) \in \overline{G}(\xi)$ for every $\xi\in \Oc$ and $q(b(\xi))=a$ for every $\xi \in \Oc$. 
Such a mapping $b$ is called \textit{admissible}.
For an admissible $b$, we define 
$$f_b\colon\Bun(\Oc)\to\TT,\quad f_b(\xi,\chi):=\chi(b(\xi)).$$
With this notation, the smooth manifold structure of $\Bun(\Oc)$ is uniquely determined by the condition 
$f_b\in\Ci(\Bun(\Oc),\TT)$ for every admissible function $b$. 

\subsection*{The Puk\'anszky correspondence} 
For every coadjoint quasi-orbit $\Oc\in(\gg^*/G)^\sim$ 
its corresponding bundle $\Bun(\Oc)$ carries the action of $G$.  
The orbit closures of that action  constitute a partition of $\Bun(\Oc)$ by \cite[Prop. 7.1, page 539]{Pu71}. 
We denote by
\begin{equation}\label{bun_orbits}
(\Bun(\Oc)/G)^\approx=\{\overline{Gp}\mid p=(\xi,\chi)\in\Bun(\Oc)\}
\end{equation}
the set of these orbit closures.
Thus
$$\RelS:=\bigcup\limits_{\Oc\in (\gg^*/G)^\sim}(\Bun(\Oc)/G)^\approx,$$ 
is a set of subsets  of $\Bun(\gg^*)$, and these subsets constitute a partition of $\Bun(\gg^*)$.
Each such a subset  $\O\in\RelS$ is called a \emph{generalized orbit of the coadjoint action of~$G$}.

There is a bijective correspondence between the elements of $\RelS$ and the primitive ideals of $C^*(G)$ (see \cite{Pu73}), and each primitive ideal of $C^*(G)$ is the kernel of a unique quasi-equivalence class of normal representations, so that there is bijective mapping, the Puk\'anszky correspondence, 
 $$\ell\colon \RelS\to \stackrel{\frown}{G}_\nor.$$ 
 (See \cite[Thm. 3, page 134]{Pu74}.)
 
We give a brief description of the correspondence between $\RelS$ and $\Prim(C^*(G))$. 
Let  $p=(\xi, \chi) \in \Bun(\gg^*)$ with $\xi\in\gg^*\setminus \dg^\perp$  and $\chi\in\mathop{G}\limits^{\mathsout{\wedge}}(\xi)$,  
and  let $\pi:=\kappa_D(D\xi\vert_{\dg})$, where
$\kappa_D\colon \dg^*/D\to \widehat{D} $ denotes the Kirillov homeomorphism for the nilpotent Lie group~$D$.
There exists $\lambda(p)\in \widehat{K}$ (where $K=D\overline{G}(\xi)=\overline{G}(\xi)D$ as above) such that 
$\lambda(p) \vert_{D}=\pi$ and 
\begin{align}
& \lambda (a p) = \lambda (p) \; \; \text{ for every } a \in D G(\xi\vert_{D})_\1 ,  \label{1001} \\
& \lambda (\varphi p) =\varphi \vert_{K} \lambda(p) \; \text { for every } \varphi\in X(G). \label{1002}
\end{align}
(See \cite[c1) and c2), pages 83--84]{Pu73}.)
Then  $T(p):=\Ind_K^G \lambda(p)$ is a factor representations of $G$, whose quasi-equivalence class depends only on the generalized orbit 
$\O=\overline{Gp}\in (\Bun(\Oc)/G)^\approx$, 
where $\Oc = r(\xi)$. 
The map
$$
 \Jc\colon  \Bun(\gg^*)\to \Prim(G), \; \; \Jc(p)=\Ker T(p)
 $$
is constant on generalized orbits, that is, 
$\Jc(p_1)=\Jc(p_2)=: J(\O)$ if $ p_1, p_2\in \O$ 
with $\O\in (\Bun(\Oc)/G)^\approx$
for some $\Oc \in (\gg^*/G)^\sim$. 
 Then
 \begin{equation}\label{Jc}
 \Jc \vert_{\Bun(\Oc)} \; \text{is continuous}
\end{equation}
(see \cite[Lemma~8, page 93]{Pu73}), 
the map 
\begin{equation}\label{J}
\RelS\to \Prim(G), \quad \O \mapsto  J(\O), 
\end{equation}
is a bijection, and 
$ J(\O)=\Ker \ell(\O)$.
(See  \cite[Thm.~1, page 114]{Pu73}.)

Finally, we recall that
\begin{equation}\label{open_cor1_proof_eq1}
	\ell(\O)\text{ is type~$\I$} \iff 
	\O\in\Bun(\Oc)/G \text{ and }
	[G(\xi):\overline{G}(\xi)]<\infty\text{ for some }\xi\in\Oc.
	\end{equation}
	(See \cite[Thm. 2, pages 551--552]{Pu71}.)
	
	\begin{lemma}\label{fixed}
If $H$ is a connected Lie group, $\sigma\in[\hg,\hg]^\perp\subseteq\hg^*$, and $h\in H$, then $h\sigma=\sigma$. 
\end{lemma}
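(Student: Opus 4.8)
The plan is to recognize the statement as the special case of the computation preceding \eqref{quasi_eq2} in which the ideal is the whole Lie algebra~$\hg$. The hypothesis $\sigma\in[\hg,\hg]^\perp$ says exactly that $\langle\sigma,[x,y]\rangle=0$ for all $x,y\in\hg$, i.e. $\hg\subseteq\hg(\sigma)$ (the instance of $\gg(\xi)$ obtained by taking $G=H$, $\gg=\hg$, $\xi=\sigma$). Applying that argument with the ideal $\hg\subseteq\hg$ itself, whose annihilator in $\hg^*$ is $\{0\}$, would give $H\sigma\subseteq\sigma+\{0\}=\{\sigma\}$. Since $H$ is connected, it suffices to prove $h\sigma=\sigma$ for $h$ in a neighbourhood of $\1$, as such a neighbourhood generates $H$; for completeness I would reproduce the short computation directly.

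First I would fix a symmetric open neighbourhood $V_0$ of $0\in\hg$ such that $\exp_H$ restricts to a diffeomorphism of $V_0$ onto a neighbourhood $U_1$ of $\1$ and such that the series $\sum_{k\ge0}\frac{1}{k!}(\ad_\hg x)^k$ converges in $\Aut(\hg)$ for every $x\in V_0$, exactly as in Section~\ref{Sect2}. For $h=\exp_H x$ with $x\in V_0$ and any $y\in\hg$, the coadjoint action gives
$$\langle h\sigma,y\rangle=\langle\sigma,\Ad(h^{-1})y\rangle=\Big\langle\sigma,\sum_{k\ge0}\frac{(-1)^k}{k!}(\ad_\hg x)^ky\Big\rangle.$$
For every $k\ge1$ one has $(\ad_\hg x)^ky=[x,(\ad_\hg x)^{k-1}y]\in[\hg,\hg]$, so each such term is annihilated by $\sigma$; only the $k=0$ term survives and $\langle h\sigma,y\rangle=\langle\sigma,y\rangle$. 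As $y\in\hg$ was arbitrary, $h\sigma=\sigma$ for all $h\in U_1$.

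Finally, since $H$ is connected, every $h\in H$ can be written as $h=h_1\cdots h_m$ with $h_1,\dots,h_m\in U_1$, so $h\sigma=h_1\cdots h_{m-1}(h_m\sigma)=h_1\cdots h_{m-1}\sigma$, and iterating yields $h\sigma=\sigma$. No genuine obstacle arises; the only points requiring care are the convergence of the series defining $\Ad(h^{-1})$ on $V_0$ and the elementary remark that $(\ad_\hg x)^ky\in[\hg,\hg]$ for $k\ge1$, which is precisely what makes the annihilation condition on~$\sigma$ collapse the whole series to its constant term.
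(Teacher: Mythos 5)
Your proof is correct, but it takes a genuinely different route from the paper's own proof of Lemma~\ref{fixed}. You argue algebraically: for $h=\exp_H x$ you expand $\Ad(h^{-1})=\sum_{k\ge0}\frac{(-1)^k}{k!}(\ad_\hg x)^k$, observe that every term with $k\ge1$ lies in $[\hg,\hg]$ and is thus annihilated by $\sigma$, and then propagate $h\sigma=\sigma$ to all of $H$ using that a connected group is generated by any symmetric neighbourhood of $\1$. As you note, this is exactly the computation the paper performs for \eqref{quasi_eq2}, specialized to the ideal $\hg\subseteq\hg$, whose annihilator is $\{0\}$; and there is no circularity in that reduction, since the computation for \eqref{quasi_eq2} does not rely on Lemma~\ref{fixed}. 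The paper's proof is instead analytic: it reduces, by connectedness, to showing $(\exp_H x)\sigma=\sigma$ for each $x\in\hg$, and proves this by checking that the curve $\varphi(t):=(\exp_H(tx))\sigma$ satisfies $\varphi'(t_0)=(\exp_H(t_0x))(\sigma\circ\ad_\hg x)=0$ for every $t_0\in\RR$, hence is constant. What each approach buys: the paper's differentiation argument needs only the infinitesimal form of the coadjoint action and avoids the identity $\Ad(\exp_H x)=\ee^{\ad_\hg x}$ and all series manipulation, whereas your version reuses the algebraic framework of Section~\ref{Sect2} verbatim; note also that the convergence caveat you flag is vacuous in finite dimensions (the exponential series of a linear operator always converges), so the restriction to the neighbourhood $V_0$ could be dropped and your computation in fact handles every element of $\exp_H(\hg)$ at once.
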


\begin{proof}
Since the Lie group $H$ is connected, we have 
$$H=\bigcup\limits_{n\ge 1}\underbrace{\exp_H(\hg)\cdots\exp_H(\hg)}_{n\ \rm times}.$$
Therefore it suffices to show that for every $x\in\hg$ we have $(\exp_Hx)\sigma=\sigma$.  
To this end we define $\varphi\colon\RR\to \hg^*$, $\varphi(t):=(\exp_H(tx))\sigma$. 
Then $\varphi\in\Ci(\RR,\hg^*)$ and for arbitrary $t_0\in\RR$ we have 
\begin{align*}
\varphi'(t_0)
&=\lim_{t\to t_0}\frac{1}{t}((\exp_H(tx))\sigma-(\exp_H(t_0x))\sigma) \\
&=(\exp_H(t_0x))\lim_{t\to 0}\frac{1}{t}((\exp_H(tx))\sigma-\sigma) \\
&=(\exp_H(t_0x))(\sigma\circ\ad_\hg x) \\
&=0,
\end{align*}
since $\sigma\in[\hg,\hg]^\perp$, hence $\sigma\circ\ad_\hg x=0$. 
Therefore the function $\varphi\colon\RR\to \hg^*$ is constant, 
and then $(\exp_Hx)\sigma=\varphi(1)=\varphi(0)=\sigma$, which completes the proof. 
\end{proof}

\begin{remark}\label{B_orbits}
\normalfont
Since $\dg=[\ggalg, \ggalg]$, we have $a\sigma=\sigma$ for every $\sigma\in\dg^\perp$ and $a\in\Galg$, as we will see in Lemma~~\ref{fixed}.
This implies that the action of the additive group $\dg^\perp$  on $\gg^*$ by translations commutes with the action of $\Galg$, 
hence one has the group action of the product group $(\Galg\times\dg^\perp)\times\gg^*\to\gg^*$, 
$((a,\sigma),\xi)\mapsto a\xi+\sigma$. 
For every $(\Galg\times\dg^\perp)$-orbit $\Omega\in\gg^*/(\Galg\times\dg^\perp)$ the following assertions hold: 
\begin{enumerate}[{\rm(i)}]
	\item\label{B_orbits_item1} The subset $\Omega\subseteq\gg^*$ is locally closed. 
	\item\label{B_orbits_item2} The set $\Bun(\Omega)$ has the structure of a smooth manifold  
	for which the mapping $\tau\vert_{\Bun(\Omega)}\colon\Bun(\Omega)\to\Omega$ is a trivial principal trivial bundle whose structural group is $\TT^{\rk(\xi)}$ for any $\xi\in\Omega$. 
	\item If $\Oc\in(\gg^*/G)^\sim$ and $\Oc\subseteq\Omega$, then for all $\xi \in \Oc$, we have
	$ G\xi \subseteq \Oc \subseteq \Galg\xi \subseteq \Omega$, and
	the principal bundle $\Bun(\Oc)$ is isomorphic to the restriction of the principal bundle $\Bun(\Omega)$ 
	from $\Omega$ to $\Oc$. 
	\item\label{B_orbits_item4}  For each $\xi\in\Omega$, the orbit $\Galg\xi$ is closed in $\Omega$.
	\end{enumerate}
See \cite[Subsect. 5, page 85]{Pu73} and \cite[Subsect. 5, page 529]{Pu71} for these facts. 
The relation between the bundles $\Bun(\Oc)$ and $\Bun(\Omega)$ is discussed in \cite[paragraph~c., page 538]{Pu71}, 
while \eqref{B_orbits_item4} follows from \cite[Lemma 3.2.1, page 553]{Pu71}.
\end{remark}

\subsection{Open coadjoint quasi-orbits}\label{open-quasi}
We analyse here the main geometric object we are concerned with, the open coadjoint quasi-orbits. 

For the  solvable Lie group $G$, 
we denote $k_0:=\min\{\dim\gg(\xi)\mid\xi\in\gg^*\}$.
Then let
$$\gg^*_{\rm gen}:=\{\xi\in\gg^*\mid\dim\gg(\xi)=k_0\}, $$
be the union of all coadjoint orbits of $G$ having maximal dimension.

\begin{proposition}\label{minimal}
	If $\Oc\in(\gg^*/G)^\sim$ is open in $\gg^*$, then $\Oc\subseteq\gg^*_{\rm gen}$. 
\end{proposition}

\begin{proof}
Let us consider the linear operator $T\colon\gg^*\to\Lc(\gg,\gg^*)$, 
where for every $\xi\in\gg^*$ we define $T\xi\colon\gg\to\gg^*$ by $(T\xi)(x):=(\ad_\gg^*x)\xi=-\xi\circ\ad_\gg x$, 
hence $\Ker (T\xi)=\gg(\xi)$. 
Then, denoting $m:=\dim\gg$, we have 
$m-k_0=\max\{\dim(\Ran T\xi)\mid\xi\in\gg^*\}$ and 
$\gg^*_{\rm gen}=\{\xi\in\gg^*\mid \dim(\Ran T\xi)=m-k_0\}$. 
Then  $\gg^*_{\rm gen}$ is a Zariski open subset of $\gg^*$, and in particular $\gg^*_{\rm gen}$ is a dense open subset of~$\gg^*$. 

Since $\Oc\subseteq\gg^*$ is an open subset, it follows that  $\Oc\cap\gg^*_{\rm gen}\ne\emptyset$. 
For arbitrary $\xi\in\Oc$ we now prove by contradiction that $\xi\in\gg^*_{\rm gen}$. 
Indeed, if we assume that $\xi\in\gg^*\setminus\gg^*_{\rm gen}$ then,  
since $\gg^*_{\rm gen}$ is an open $G$-invariant subset of $\gg^*$, 
we obtain $\overline{G\xi}\subseteq \gg^*\setminus\gg^*_{\rm gen}$. 
Recalling that $\Oc=r(\xi)\subseteq\overline{G\xi}$ 
by \eqref{quasi_eq1}, we then obtain $\Oc\subseteq \gg^*\setminus\gg^*_{\rm gen}$, which is a contradiction with $\Oc\cap\gg^*_{\rm gen}\ne\emptyset$. 
Thus $\Oc\subseteq \gg^*_{\rm gen}$, and this completes the proof. 
\end{proof}

\begin{corollary}
\label{minimal_cor}
If $G$ has open coadjoint orbits then every open coadjoint quasi-orbit is actually a coadjoint orbit. 
\end{corollary}

\begin{proof}
All open coadjoint orbits of $G$
are contained in 
$\gg^*_{\rm gen}$ by  Proposition~\ref{minimal}, and  have the same dimension as  $\gg^*$. 
Thus  $\dim G\eta=\dim \gg^*$, for every  $\eta\in  \gg^*_{\rm gen}$.
It follows, by  \cite[Ch. III, \S 1, no.~7, Rem.]{Bo06}, that $G\eta$ is open for every  $\eta\in  \gg^*_{\rm gen}$
that is, the union of all open coadjoint orbits of $G$ is 
equal to~$\gg^*_{\rm gen}$. 

For an arbitrary open coadjoint quasi-orbit  $\Oc\in(\gg^*/G)^\sim$, 
Proposition~\ref{minimal} shows that$\Oc\subseteq\gg^*_{\rm gen}$. 
Hence, by the above paragraph, there exists $\xi\in\Oc$ whose coadjoint orbit $G\xi\subseteq\gg^*$ is open in $\gg^*$, and in particular is locally closed. 
This implies $\Oc=G\xi$, 
and we are done. 
\end{proof}

The next proposition extends \cite[Prop.~6.1]{BB18a} from exponential Lie groups to arbitrary 1-connected solvable Lie groups.

\begin{proposition}\label{even}
For every 1-connected solvable Lie group $G$ with $\dim G>0$, 
the number of open simply connected coadjoint orbits of $G$ 
is finite and even.
\end{proposition}

\begin{proof} 
Let $\Fc\subseteq\gg^*/G$ be set of simply connected open coadjoint orbits of~$G$. 
Then $\Fc$ is a finite set by \cite[Prop.~4.5 (ii)]{BB16}.

Let $\Oc\in \Fc$ and select any  $\xi \in \Oc$, hence $\Oc = G\xi=:\Oc_\xi$. 
We claim that  there is
$\xi_0\in \Oc$ such that $-\xi_0\not\in \Oc$.

Assume the contrary; then the map $\omega_0 \colon \Oc  \to \Oc$, $\omega_0(\xi)=-\xi$, is well-defined and continuous, since $\Oc$ is open  and the map $\gg^*\to \gg^*$, $\eta\mapsto -\eta$ is continuous. 
Moreover, $\omega_0\circ \omega_0= \text{id}$. 
Since $\Oc$ is simply connected and $G$ is a 1-connected solvable Lie group, 
there is a diffeomorphism 
$\chi\colon \Oc \to \RR^d$, where $d=\dim \Oc$. 
(See e.g., \cite[Thm.~2]{Pu92}.) 
The map $\omega \colon \RR^d \to \RR^d$, $\omega= \chi\circ \omega_0\circ \chi^{-1}$ is a homeomorphism such that $\omega\circ \omega= \text{id}$.
Then,  by P.A~Smith's fixed point theorem (see \cite[Thm.~1a]{Ei40}), the set of fixed points of $\omega$
is not empty, hence there is $\eta\in \Oc$ such that $-\eta=\omega_0(\eta)=\eta$.
It follows that $0=\eta\in \Oc$, that is, $\Oc =\{0\}$, and this is a contradiction. 

We have thus proved that there is $\xi_0\in \Oc$ such that $-\xi_0\not \in \Oc$, hence 
$$\Oc=\Oc_{\xi_0}\ne \Oc_{-\xi_0}=-\Oc.$$
On the other hand, the map $\Oc=\Oc_{\xi_0} \to \Oc_{-\xi_0}=-\Oc$, $ \xi \mapsto -\xi$  is a homeomorphism. 
Thus, for every $\Oc\in\Fc$ we have $-\Oc\in\Fc$ and $-\Oc\ne\Oc$. 
Hence the finite set $\Fc$ is the disjoint union of 2-element sets $\{\Oc,-\Oc\}$ with $\Oc\in\Fc$, 
which shows that the number of elements of $\Fc$ is even. 
\end{proof}

\section{Square integrable representations and the corresponding generalized coadjoint orbits}
\label{sect3}

Our key technical result in this paper is the following characterization of square-integrable factor representations in terms of geometric objects that are intrinsically associated to the solvable Lie group under consideration. 

\begin{theorem}\label{open_th}
Let $G$ be a $1$-connected solvable Lie group. 
Assume  that $\pi$ is a normal representation of $G$,  and let
  $\O\in (\Bun(\Oc)/G)^\approx$, for a coadjoint quasi-orbit $\Oc\in (\gg^*/G)^\sim$, be the generalized orbit associated to the 
  quasi-equivalence class of $\pi$ by the Puk\'ansky correspondence. 
Then the following assertions are equivalent: 
\begin{enumerate}[{\rm(i)}]
	\item\label{open_th_item1} The representation $\pi$  is square integrable. 
	\item\label{open_th_item2} The group $G$ has trivial centre, the subset $\Oc\subseteq\gg^*$ is open, 
	and every $G$-orbit in $\Bun(\Oc)$ is dense. 
\end{enumerate}
\end{theorem}

We note that the condition
that every $G$-orbit in $\Bun(\Oc)$ is dense can be written equivalently as $\O=\Bun(\Oc)$, or
$(\Bun(\Oc)/G)^\approx=\{\Bun(\Oc)\}$, or $\Bun(\Oc)\in(\Bun(\Oc)/G)^\approx$. 

The proof of Theorem~\ref{open_th} requires some preparation. 
Our starting point is the following characterization of square integrability obtained
in \cite[Thm. 3.4]{Ros78}, 
which is not intrinsic, as it depends on the non-canonical auxiliary group $\Galg$ introduced in the beginning of Section~\ref{Prel}. 

\begin{lemma}\label{open_lemma}
Assume the group $G$  has trivial centre. 
Let $\Oc\in (\gg^*/G)^\sim$, and $\O\in (\Bun(\Oc)/G)^\approx$. 
Select any $\xi\in\Oc$, denote $\xi_0:=\xi\vert_\dg\in\dg^*$, 
and define 
$\Omega:=\dg^\perp+\Galg\xi\subseteq\gg^*$. 
Then the following assertions are equivalent: 
\begin{enumerate}[{\rm(i)}]
	\item\label{open_lemma_item1} 
	The quasi-equivalence class $\ell(\O)\in \stackrel{\frown}{G}_\nor$ is square integrable. 
	\item\label{open_lemma_item2} 
	The subset $\Galg\xi_0\subseteq\dg^*$ is open
	and for every $p\in\Bun(\Omega)$ the subset $Gp\subseteq\Bun(\Omega)$ is dense. 
\end{enumerate}
\end{lemma}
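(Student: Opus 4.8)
The plan is to recast the two conditions in~\eqref{open_lemma_item2} as statements about the single locally closed orbit $\Omega$ and the $G$-action on the trivial torus bundle $\Bun(\Omega)$, and then to match these against the square-integrability criterion for normal representations. First I would reformulate the openness condition. Since $\dg=[\widetilde{\gg},\widetilde{\gg}]$ is an ideal of $\widetilde{\gg}$, the restriction map $\iota^*\colon\gg^*\to\dg^*$ is a surjective, $\widetilde{G}$-equivariant linear map with $\Ker\iota^*=\dg^\perp$, whence $\Omega=(\iota^*)^{-1}(\widetilde{G}\xi_0)$ (indeed $\eta\vert_\dg\in\widetilde{G}\xi_0$ forces $\eta\in a\xi+\dg^\perp$ for some $a\in\widetilde{G}$). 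Being a surjective linear map of finite-dimensional spaces, $\iota^*$ is continuous and open, so $\widetilde{G}\xi_0$ is open in $\dg^*$ if and only if $\Omega$ is open in $\gg^*$. Thus~\eqref{open_lemma_item2} is equivalent to requiring that $\Omega$ be open and that every $G$-orbit in $\Bun(\Omega)$ be dense.

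Next I would extract the geometric content of the density hypothesis. Applying the projection $\tau$ and using its continuity and $G$-equivariance, density of every $Gp$ in $\Bun(\Omega)$ forces every $G\eta$ with $\eta\in\Omega$ to be dense in $\Omega$; thus all points of $\Omega$ share the common closure $\overline{\Omega}$ and are mutually \Rel-equivalent, so $\Omega\subseteq\Oc$. As $\Omega$ is locally closed, the set $\overline{\Omega}\setminus\Omega$ is closed and $G$-invariant, so no boundary point can have an orbit dense in $\overline{\Omega}$; together with~\eqref{quasi_eq1} this yields $\Oc=\Omega$, hence $\Bun(\Oc)=\Bun(\Omega)$ by Remark~\ref{B_orbits}, and then $\O=\overline{Gp}=\Bun(\Omega)$. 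Openness of $\Oc$ is moreover possible only under the standing trivial-centre assumption: if $\zg_\gg\neq\{0\}$ then $\zg_\gg\subseteq\gg(\xi)$ for every $\xi$, so~\eqref{quasi_eq2} confines each quasi-orbit to the proper affine subspace $\xi+\zg_\gg^\perp$.

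It then remains to prove that square integrability of $\ell(\O)$ is equivalent to ``$\Omega$ open and $\O=\Bun(\Omega)$ with all $G$-orbits dense''. For this I would combine the square-integrability criterion of~\cite{Ros78} with the description of $\ell(\O)$ furnished by the Puk\'anszky correspondence~\cite{Pu71,Pu74}. Openness of $\Omega$ is what endows $\ell(\O)$ with a positive formal degree (cf.\ \cite{Mo77}), while density of all $G$-orbits in $\Bun(\Omega)$ encodes the minimality needed for the corresponding factor to embed discretely in the regular representation. For the technical control I would use that the $\widetilde{G}$-orbits are closed in $\Omega$ (last item of Remark~\ref{B_orbits}), which provides a transversal to the $\widetilde{G}$-action and reduces the analysis of the coefficients of $\ell(\O)$ to the induction data along $D=[G,G]$ and the abelian quotient $G/D$.

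The main obstacle is exactly this last equivalence, in both directions. The delicate point is to prove that square integrability forces every $G$-orbit in $\Bun(\Omega)$ to be dense, and not merely that some orbit is dense (which would only say that the representation is a factor): a single non-dense orbit would produce a proper invariant piece incompatible with the existence of a nonzero square-integrable coefficient. It is here that the fine structure of normal representations and the criterion of~\cite{Ros78} carry the argument.
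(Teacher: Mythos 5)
There is a genuine gap, and you in fact name it yourself: your plan never proves the equivalence between square integrability of $\ell(\O)$ and the pair of conditions ``$\widetilde{G}\xi_0$ open and all $G$-orbits dense in $\Bun(\Omega)$''. Everything in your third paragraph (formal degree via \cite{Mo77}, ``minimality needed for the factor to embed discretely in the regular representation'', a transversal coming from closedness of the $\widetilde{G}$-orbits) is heuristic language, not an argument, and your closing paragraph concedes that this equivalence --- which is the entire content of the lemma --- is ``the main obstacle''. Carrying out that step from scratch would essentially mean redoing Rosenberg's analysis of which normal representations in the Puk\'anszky construction are square integrable (traces on the associated factors, coefficients of induced representations along $D=[G,G]$, etc.), none of which appears in the proposal.

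The paper's proof is a single line: the statement \emph{is} \cite[Th.~3.4]{Ros78} specialized to the case where the center of $G$ is $Z=\{\1\}$; Rosenberg's theorem is formulated exactly in terms of openness of $\widetilde{G}\xi_0$ in $\dg^*$ and density of the $G$-orbits in $\Bun(\Omega)$, so no reformulation is needed. You cite \cite{Ros78} but never identify the precise theorem nor observe that it already has the required form, and instead set out to rebuild it. Your first two paragraphs (openness of $\widetilde{G}\xi_0$ iff openness of $\Omega=(\iota^*)^{-1}(\widetilde{G}\xi_0)$, and the deduction that density of all orbits forces $\Oc=\Omega$ and $\O=\Bun(\Omega)$) are correct, but they belong to the deduction of Theorem~\ref{open_th} from this lemma (where the paper carries them out via Lemmas \ref{open_qo} and \ref{qo}), not to the proof of the lemma itself; they do not reduce the outstanding equivalence at all.
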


\begin{proof}
Specialize \cite[Thm. 3.4]{Ros78} for the case when the centre of $G$ is $Z=\{\1\}$. 
\end{proof}

We aim to replace the above conditions involving $\Galg$ by conditions depending on~$G$ only. 
To do this, we need some lemmas.

Stability properties of open orbits, as in the following result, have appeared in other contexts in the earlier literature. 
See for instance \cite[Lemma 3.1]{Fu15}.

\begin{lemma}\label{open_orbit}
	Let $H$ be a connected Lie group with a closed connected subgroup $K\subseteq H$ satisfying $[\kg,\kg]=[\hg,\hg]$. 
	 If $\xi\in\kg^*$ and $H\xi\subseteq\kg^*$ is open, 
	 then $[\kg,\kg]^\perp+H\xi=H\xi$.
\end{lemma}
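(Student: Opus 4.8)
The plan is to exploit that the hypothesis forces $\kg$ to be an ideal of $\hg$, so that $H$ acts on $\kg^*$ and the annihilator $V:=[\kg,\kg]^\perp\subseteq\kg^*$ is available as a space of ``constants'' for this action. First I would record that $[\hg,\kg]\subseteq[\hg,\hg]=[\kg,\kg]\subseteq\kg$, so $\kg$ is an ideal of $\hg$; since $H$ is connected, $\Ad_H(h)$ preserves both $\kg$ and the derived ideal $[\kg,\kg]=[\hg,\hg]$, and hence $H$ acts on $\kg^*$ and on $[\kg,\kg]^*$ so that the restriction map $\rho\colon\kg^*\to[\kg,\kg]^*$, $\rho(\eta)=\eta\vert_{[\kg,\kg]}$, is $H$-equivariant with $\Ker\rho=V$. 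Next I would show that $H$ fixes $V$ pointwise: for $\sigma\in V$, $x\in\hg$ and $y\in\kg$ one has $[x,y]\in[\hg,\kg]\subseteq[\kg,\kg]$, so $\sigma\circ\ad_\hg x$ vanishes on $\kg$, and the one-parameter-subgroup computation of Lemma~\ref{fixed} gives $(\exp_H x)\sigma=\sigma$; connectedness of $H$ then yields $h\sigma=\sigma$ for all $h\in H$. In particular translations by $V$ commute with the $H$-action, and for $\eta=h\xi\in H\xi$ and $v\in V$ we have $\eta+v=h\xi+hv=h(\xi+v)$, so $\eta+v\in H\xi$ as soon as $\xi+v\in H\xi$. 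Thus it suffices to prove $\xi+V\subseteq H\xi$.

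For this, set $\xi_0:=\rho(\xi)$ and let $H(\xi_0)=\{h\in H\mid h\xi_0=\xi_0\}$ be its stabilizer. The point is to understand $A:=H\xi\cap(\xi+V)$. Since $h\xi\in\xi+V$ is equivalent to $\rho(h\xi)=h\xi_0=\xi_0$, i.e.\ to $h\in H(\xi_0)$, I get $A=H(\xi_0)\xi$. Moreover, for $h\in H(\xi_0)$ the element $\varphi(h):=h\xi-\xi$ lies in $\Ker\rho=V$, and using that $H$ fixes $V$ pointwise one computes $\varphi(h_1h_2)=\varphi(h_1)+\varphi(h_2)$; hence $\varphi\colon H(\xi_0)\to(V,+)$ is a group homomorphism and $A=\xi+\varphi(H(\xi_0))$, so that $A-\xi=\varphi(H(\xi_0))$ is a \emph{subgroup} of the additive group $V$.

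Finally I would feed in the hypothesis that $H\xi$ is open. Then $A=H\xi\cap(\xi+V)$ is open in the affine subspace $\xi+V$, so $\varphi(H(\xi_0))=A-\xi$ is an open subgroup of $V$. Since $V$ is a linear subspace of $\kg^*$, it is connected, and an open subgroup of a connected topological group is the whole group; thus $\varphi(H(\xi_0))=V$, i.e.\ $A=\xi+V$, which is exactly $\xi+V\subseteq H\xi$. Combined with the reduction above this gives $[\kg,\kg]^\perp+H\xi=H\xi$. The step I expect to be the real obstacle is this passage from openness to equality: openness of $H\xi$ alone cannot force $\xi+V\subseteq H\xi$, since an open orbit need not be closed, and it is precisely the subgroup structure of $A-\xi$ that converts openness into the required closedness, via the fact that open subgroups are automatically closed.
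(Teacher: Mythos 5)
Your proof is correct, and it reaches the conclusion by a genuinely different mechanism than the paper's, although both rest on the same two pillars: that $[\hg,\kg]\subseteq[\hg,\hg]=[\kg,\kg]$ makes $\kg$ an ideal and forces $H$ to fix $V:=[\kg,\kg]^\perp\subseteq\kg^*$ pointwise, and that this reduces everything to showing $\xi+V\subseteq H\xi$ because $v+h\xi=h(v+\xi)$. (For the fixed-point property the paper simply invokes Lemma~\ref{fixed}, which is literally stated for $[\hg,\hg]^\perp\subseteq\hg^*$ rather than for $[\kg,\kg]^\perp\subseteq\kg^*$; your re-derivation of the one-parameter-group computation inside $\kg^*$ is the more careful route, since otherwise one must transfer the statement through the $H$-equivariant restriction $\hg^*\to\kg^*$.) Where you part ways is the final step: the paper fixes a norm, chooses $\varepsilon_0>0$ with $\xi+\sigma\in H\xi$ whenever $\Vert\sigma\Vert<\varepsilon_0$, and runs an induction proving $n\sigma+\xi\in H\xi$ for all $n\ge 1$ via the identity $(n+1)\sigma+\xi=h_n(\sigma+\xi)$, concluding because every element of $V$ is an integer multiple of a small one. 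You instead show that the set of admissible translations, your $A-\xi=\varphi(H(\xi_0))$, is an \emph{open subgroup} of the connected group $(V,+)$, hence all of $V$. This trades the paper's Archimedean scaling induction for the open-subgroup-of-a-connected-group principle: no norm is needed, and the equivariant restriction $\rho$, the stabilizer $H(\xi_0)$ and the homomorphism $\varphi$ constitute a structural decomposition the paper never introduces. One small economy worth noting: the subgroup property of $\{v\in V\mid \xi+v\in H\xi\}$ can be obtained directly, without $\varphi$ — if $\xi+v=h\xi$ and $\xi+w=k\xi$, then $\xi+v+w=h(\xi+w)=hk\xi$ and $\xi-v=h^{-1}\xi$, again using that $H$ fixes $V$ pointwise — so the detour through $\xi_0$ and $H(\xi_0)$, while correct, is not essential to your own argument.
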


\begin{proof}
Let $\Vert\cdot\Vert$ be any norm on $\kg^*$. 
Since $H\xi\subseteq\kg^*$ is open, there exists $\varepsilon_0>0$ such that for every $\eta\in\kg^*$ with $\Vert\eta\Vert<\varepsilon_0$ we have $\eta+\xi\in H\xi$. 

Now let $\sigma\in[\kg,\kg]^\perp\subseteq\kg^*$ be arbitrary with $\Vert\sigma\Vert<\varepsilon_0$. 
To conclude the proof, we show by induction that for every integer $n\ge 1$ we have $n\sigma+\xi\in H\xi$. 
In fact, for $n=1$, this follows by the way $\varepsilon_0$ was chosen. 
If we assume the assertion holds true for $1,\dots,n$, then there exist $h_1,\dots,h_n$ with $j\sigma+\xi=h_j\xi$ for $j=1,\dots,n$. 
Then 
$$(n+1)\sigma+\xi=\sigma+(n\sigma+\xi)=\sigma+h_n\xi=h_n(\sigma+\xi)=h_nh_1\xi$$
where we used the equality $\sigma=h_n\sigma$, which follows by Lemma~\ref{fixed}. 
Since $h_nh_1\in H$, this completes the induction step, and we are done. 
\end{proof}

In the next two lemmas and in the proof of Theorem~\ref{open_th} below 
we use again the non-canonical auxiliary group $\Galg$ introduced in the beginning of Section~\ref{Prel}. 

\begin{lemma}\label{open_qo}
If  $\Oc\in(\gg^*/G)^\sim$ and $\Oc\subseteq\gg^*$ is open, then 
$\Oc\in\gg^*/\Galg$ and $\Oc=\Oc+\dg^\perp$. 
Moreover, if $\xi\in\Oc$ and we denote $\xi_0:=\xi\vert_\dg\in\dg^*$, then 
the subset $\Galg\xi_0\subseteq\dg^*$ is open. 	
\end{lemma}

\begin{proof}
Let $G_1$ be a connected closed subgroup of $\Galg$ such that $G\subseteq G_1\subseteq \Galg$, as in~\eqref{open_qo_proof_eq1}.
For arbitrary $\xi\in\Oc$, it follows by \eqref{open_qo_proof_eq1} that 
$\Oc$ 
is a relatively closed subset of~$\Galg\xi$. 
But $\Oc\subseteq\gg^*$ is open, hence $\Oc$ is an open subset of $\Galg\xi$ as well. 
Since the group $\Galg$ is connected, 
its orbit $\Galg\xi$ is connected, 
and we then obtain $\Oc=\Galg\xi$. 
Moreover, $G_1\xi=\Oc\subseteq\gg^*$ is open,  
hence by Lemma~\ref{open_orbit} we have 
$\Oc=\Oc+\dg^\perp$.

On the other hand, since the subset $\Galg\xi\subseteq\gg^*$ is open and the restriction map $\iota^*\colon\gg^*\to\dg^*$ is an open mapping, it follows that the subset $\iota^*(\Galg\xi)\subseteq\dg^*$ is open. 
Since $\iota^*(\Galg\xi)=\Galg\iota^*(\xi)=\Galg\xi_0$, 
we thus see that $\Galg\xi_0\subseteq\dg^*$ is open.
\end{proof}

\begin{lemma}\label{qo}
If $\Omega\in\gg^*/(\Galg\times\dg^\perp)$, then  $\Omega\in(\gg^*/G)^\sim$ if and only if $\Omega\subseteq\overline{G\eta}$ for every $\eta\in\Omega$. 
\end{lemma}

\begin{proof}
If $\Omega\in(\gg^*/G)^\sim$ then for every $\eta\in\Omega$ we have $\Omega=r(\eta)\subseteq\overline{G\eta}$ by \eqref{quasi_eq1}. 
	
For the converse implication, note first that 
since  $\Omega\in\gg^*/(\Galg\times\dg^\perp)$,
the subset $\Omega\subseteq\gg^*$ is locally closed by Remark~\ref{B_orbits}\eqref{B_orbits_item1}. 
Moreover, $G\Omega=(G\times\{0\})(\Galg\times\dg^\perp)\Omega
=(\Galg\times\dg^\perp)\Omega\subseteq\Omega$. 
Thus for every $\eta\in\Omega$ we have $G\eta\subseteq\Omega$, 
while $\Omega\subseteq\overline{G\eta}$ by hypothesis. 
It follows that $\overline{G\eta}=\overline{\Omega}$ for arbitrary $\eta\in\Omega$, 
which shows that the points of $\Omega$ are \Rel-equivalent to each other. 
That is, $\Omega\subseteq r(\eta)$ for every $\eta\in\Omega$.  

If $\eta\in\Omega$ and $\zeta\in r(\eta)$, then $r(\zeta)=r(\eta)$ hence, by \eqref{quasi_eq1}, 
we have  $\overline{G\zeta}=\overline{r(\zeta)}=\overline{r(\eta)}=\overline{\Omega}$. 
Since $\Omega$ is locally closed in $\gg^*$, it follows that $\Omega$ is a relatively open subset of $\overline{\Omega}$, while on the other hand $G\zeta$ is a dense subset of $\overline{G\zeta}=
\overline{\Omega}$, 
hence $\Omega\cap G\zeta\ne\emptyset$. 
Since $\Omega$ is $G$ invariant, we then obtain $\zeta\in\Omega$. 
Thus $r(\eta)\subseteq\Omega$, and then $\Omega= r(\eta)\in(\gg^*/G)^\sim$. 
\end{proof}

\begin{proof}[Proof of Theorem~\ref{open_th}]
``\eqref{open_th_item1}$\implies$\eqref{open_th_item2}'' 
Since $G$ has square-integrable representations, its centre $Z$ is compact, 
as noted in \cite[\S 2.2]{Ros78}. 
Specifically in the present situation, the fact that $\pi\colon G\to\Bc(\Hc_\pi)$ is square integrable means, by definition, that $\pi$ is a factor representation and there exist $v,w\in\Hc_\pi$ with the corresponding coefficient $c_{v,w}(\cdot):=(\pi(\cdot)v\mid w)\in L^2(G)\setminus\{0\}$.
Since $Z$ is the centre of $G$, it follows that $\pi(Z)$ is contained in the centre of the von Neumann algebra $\pi(G)''$, 
while that von Neumann algebra is a factor, 
hence there exists a continuous homomorphism $\chi\colon G\to\TT$ with $\pi(z)=\chi(z)\id_{\Hc_\pi}$ for all $z\in Z$. 
This implies $\vert c_{v,w}(zg)\vert=\vert c_{v,w}(g)\vert$ 
for all $g\in G$ and $z\in Z$, 
hence, since $c_{v,w}(\cdot):=(\pi(\cdot)v\mid w)\in L^2(G)\setminus\{0\}$, 
we obtain that $Z$ is compact by \cite[A.VII.14]{Ne00}.
On the other hand, as the solvable Lie group $G$ is 1-connected, 
it is diffeomorphic to $\RR^{\dim G}$ 
hence $G$ contains no nontrivial compact subgroups. 
(See \cite[Thms. 11.2.14 and 14.3.12((i)--(ii))]{HN12}.) 
It then follows that $Z=\{\1\}$. 
Then by Lemma~\ref{open_lemma}, for $\xi \in \Oc$ and $\xi_0=\xi \vert_{\dg}$,  
the subset $\Galg\xi_0\subseteq\dg^*$ is open
and, if $\Omega=\dg^\perp+\tilde{G} \xi$, the subset $Gp\subseteq\Bun(\Omega)$ is dense
for every $p\in\Bun(\Omega)$.
Since the bundle projection $\tau\vert_{\Bun(\Omega)}\colon\Bun(\Omega)\to\Omega$ 
is continuous, $G$-equivariant, and surjective, 
it then follows that for every $\eta\in\Omega$ we have $\Omega\subseteq\overline{G\eta}$. 
Thus $\Omega\in(\gg^*/G)^\sim$ by Lemma~\ref{qo}, 
hence $\Omega=\Oc$. 
Therefore $\Bun(\Oc) =\Bun(\Omega)$, thus $Gp$ is dense in $\Bun(\Oc)$ for every $p \in \Bun(\Oc)$ by 
Lemma~\ref{open_lemma} \eqref{open_lemma_item2}.

Moreover, since $\Omega=\dg^\perp+\Galg\xi$, we have  $\Omega=(\iota^*)^{-1}(\iota^*(\Omega)) =(\iota^*)^{-1}(\Galg\xi_0)$. 
Then the hypothesis that the subset $\Galg\xi_0\subseteq\dg^*$ is open implies that the subset $\Omega\subseteq\gg^*$ is open. 
We already established that $\Oc=\Omega$, hence $\Oc\subseteq\gg^*$ is an open subset.

``\eqref{open_th_item2}$\implies$\eqref{open_th_item1}'' 
Select any $\xi\in\Oc$ and denote $\xi_0:=\xi\vert_\dg\in\dg^*$ 
and $\Omega:=\dg^\perp+\Galg\xi$. 
Since $\Oc\in(\gg^*/G)^\sim$ and $\Oc\subseteq\gg^*$ is open, 
we obtain $\Oc=\Omega$ and the subset $\Galg\xi_0\subseteq\dg^*$ is open by Lemma~\ref{open_qo}. 
On the other hand, the hypothesis $\Bun(\Oc)=\O$ is equivalent to the fact that the $G$-orbit of every point in $\Bun(\Omega)$ is dense in $\Bun(\Omega)$, by~\eqref{bun_orbits}. 
Thus Lemma~\ref{open_lemma} is applicable, and shows that the quasi-equivalence class $\ell(\O)$ is square integrable. 
This completes the proof. 
\end{proof}

\section{Unimodular solvable Lie groups do not admit square-integrable representations}\label{sect4}

As mentioned at the end of Section~\ref{sect1}, in the present paper, 
we are mainly interested in the solvable Lie groups which are 1-connected; 
this precludes the tori, for instance, whose dual spaces are actually discrete and which have many square-integrable irreducible representations, and are not covered by  Proposition~\ref{clopen} and Corollary~\ref{clopen_cor}.

In the proof of Proposition~\ref{clopen} and the rest of the paper we use notation and results from \cite[\S 2 and \S 3]{BB21b}.
In particular, for a topological space $X$, we denote by $\Cl(X)$ the space of all closed subsets of $X$ endowed with the upper topology, 
that is, the topology for which a sub-base of closed sets consists of $X$ and the sets $\downarrow F:=\{F'\mid F'\in \Cl(X), \ F'\subseteq F\}$,
	for $F\in\Cl(X)$. 
If $H$ is a topological group acting continuously on $X$, then $(X/H)^\approx$ is the subspace of $\Cl(X)$ consisting of closures of orbits of $H$ in $X$, that is, 
$$ (X/H)^\approx= \{\overline{H\cdot x}\mid x\in X\}.$$
We recall 
that a \emph{clopen} subset of a topological space is a subset that is simultaneously closed and open.

\begin{proposition}\label{clopen}
If $G$ is a $1$-connected solvable Lie group, then $\Prim(G)$ has no clopen singleton subsets.
\end{proposition}

\begin{proof}
Towards a contradiction, let us assume that there exists a clopen singleton subset $F_0:=\{\Pc_0\}\subseteq\Prim(G)$.
We denote $A:=G/[G,G]$; it is a 1-connected abelian Lie group, hence its dual space $\widehat{A}$ is simply connected as well, and
there is a continuous action $\widehat{A}\times\Prim(G)\to \Prim(G)$, $(\chi,\Pc)\mapsto \chi\cdot\Pc$.
We now recall that there exists a  topological embedding $(\Prim(G)/\widehat{A})^\approx\subseteq\Cl(\Prim(G))$, 
and that the  mapping
$q\colon \Prim(G)\to (\Prim(G)/\widehat{A})^\approx$,
$\Pc\mapsto\overline{\widehat{A}\cdot\Pc}$ is open.
(See \cite[Lemma 2.3]{BB21b}.)
Since the group $\widehat{A}$ is connected and generated by any open neighbourhood of $1\in \widehat{A}$,  
and since the set $\{\Pc_0\}\subseteq\Prim(G)$ is open,
it follows that $\widehat{A}\cdot\Pc_0 \subseteq \{\Pc_0\}$. 
The set $\{\Pc_0\}$ is closed, therefore we get that
\begin{equation}\label{clopen_proof_eq1}
\overline{\widehat{A}\cdot\Pc_0}=\{\Pc_0\}=F_0,
\end{equation}
which implies
$F_0\in(\Prim(G)/\widehat{A})^\approx$. 
The set $F_0=\{P_0\}\subseteq\Prim(G)$ is also open, and since  the mapping $q$ is  open, it follows that 
 $q (F_0)\subseteq(\Prim(G)/\widehat{A})^\approx$
is an open subset.
We have  $q(F_0)= q(\{\Pc_0\})=\{F_0\}$ by \eqref{clopen_proof_eq1},
hence
\begin{equation}\label{clopen_proof_eq3}
\{F_0\}\subseteq(\Prim(G)/\widehat{A})^\approx\text{ is an open subset}.
\end{equation}
By the way the topology on $\Cl(\Prim(G))$ was defined  (cf. \cite[Def. 2.1]{BB21b}), we directly obtain that the closure of the singleton set $\{F_0\}$ in $\Cl(\Prim(G))$ is
$\downarrow\{F_0\}:=\{F\in\Cl(\Prim(G))\mid F\subseteq F_0\} = \{\emptyset,F_0\}$.
Therefore the relative closure of~$\{F_0\}$ in $(\Prim(G)/\widehat{A})^\approx$
is
$$\downarrow\{F_0\}\cap(\Prim(G)/\widehat{A})^\approx
=\{F_0\}.$$
It then follows by \eqref{clopen_proof_eq3} that the singleton
$\{F_0\}\subseteq(\Prim(G)/\widehat{A})^\approx$ is a clopen subset.

On the other hand, the solvable Lie group $G$ satisfies the hypothesis of \cite[Thm. 3.6]{BB21b} (cf. the proof of \cite[Prop. 4.7]{BB21c}),
hence there exists a homeomorphism
$$(\Prim(D)/G)^\approx\simeq (\Prim(G)/\widehat{A})^\approx. $$
We thus obtain a nonempty clopen subset of $(\Prim(D)/G)^\approx$.
The group $D=[G, G]$ is a nilpotent Lie group, hence the Kirillov homeomorphism
$\widehat{D}\simeq\dg^*/D$ shows that the topological space $\widehat{D}$ is connected.
Moreover, the  map $\Prim(D)\to (\Prim(D)/G)^\approx$, $\Pc\mapsto \overline{G\cdot \Pc}$ (cf. \cite[Lemma 2.3]{BB21b}) is  continuous and  surjective, hence the topological space $(\Prim(D)/G)^\approx$ is connected as well,
and then it has no nonempty clopen subset.
This contradiction completes the proof.
\end{proof}

\begin{corollary}\label{clopen_cor}
The unimodular $1$-connected solvable Lie groups do not admit square integrable  representations.
\end{corollary}

\begin{proof}
We argue by contradiction. 
Let $G$ be a unimodular 1-connected solvable Lie group and $\pi\colon G\to\Bc(\Hc)$ be a square-integrable (factor) representation,
extended to a $*$-representation $\pi\colon C^*(G)\to\Bc(\Hc)$.
If we denote $\Pc:=\Ker\pi\in\Prim(G)$, then the singleton $\{\Pc\}\subseteq\Prim(G)$ is an open subset since $\pi$ is a square integrable representation, cf. \cite[Cor. 1]{Gr80}.
On the other hand, $\{\Pc\}\subseteq\Prim(G)$ is a closed subset by \cite[Cor. 3.9]{Ros78}, since $G$ is a unimodular connected solvable Lie group.
Thus $\{\Pc\}\subseteq\Prim(G)$ is a clopen subset, which is a contradiction with Proposition~\ref{clopen}.
\end{proof}

Versions of Corollary~\ref{clopen_cor} for the special case of exponential solvable Lie groups have long been known or at least expected; see for instance \cite[Ex. 5.3.8(2.)]{DuRa76} for irreducible representations and \cite[Conj. 3.7]{CuOu12} for monomial representations. 
We also mention 
the interesting fact that the regular 
representation of a unimodular locally compact group has 
admissible vectors if and only if that group is discrete, cf. \cite{Fu02} and \cite[Thm. 2.42]{Fu05}.

\section{Open orbits,  type I representations, and compact open subsets of the primitive ideal space}\label{sect5}

In this section we investigate the relation between open coadjoint orbits of a 1-connected solvable Lie group $G$ on the one hand and type~$\I$ representations on the other hand, and show that the existence of an open coadjoint orbit implies the existence of a compact open subset of $\Prim(G)$.

\subsection{Open quasi-orbits and type~$\I$ representations}\label{open_subsect1}
We start by proving a technical result that 
was noted without proof in \cite[end of \S III.1(4), page 825]{Pu86} 
and also implicitly used in the proof of \cite[\S 4.13, Prop.]{Pu99}. 
We choose to indicate the corresponding construction 
since it actually requires some nontrivial results.

\begin{lemma}
	\label{surj}
	For every $\Oc\in(\gg^*/G)^\sim$ and $\O\in(\Bun(\Oc)/G)^\approx$ we have $\tau(\O)=\Oc$. 
\end{lemma}

\begin{proof}
	We recall from \eqref{alpha} that there is a homeomorphism $\alpha\colon\Bun(\Oc)\to\Oc\times\TT^k$ with $\pr_1\circ\alpha=\tau\vert_{\Bun(\Oc)}$, 
	where $k:=\rk(\Oc)$. 
	Since the torus $\TT^k$ is compact, it follows by \cite[Ch. I, \S 10, no. 3. Cor. 5]{Bo07b}  that $\pr_1$ is a proper map, hence $\tau\vert_{\Bun(\Oc)}\colon\Bun(\Oc)\to\Oc$ is a proper map, too. 
	Then $\tau\vert_{\Bun(\Oc)}\colon\Bun(\Oc)\to\Oc$ is a closed map by \cite[Ch. I, \S 10, no. 1. Prop. 1]{Bo07b}, hence for every subset $A\subseteq \Bun(\Oc)$ we have $\tau(\overline{A})=\overline{\tau(A)}\cap\Oc$,  
	where $\overline{A}$ denotes the closure of $A$ in $\Bun(\Oc)$, while $\overline{\tau(A)}$ is the closure of $\tau(A)$ in $\gg^*$. 
	In particular, for any $p=(\xi,\chi)\in\O$ 
	we obtain 
	$$\tau(\O)=\tau(\overline{Gp})=\overline{\tau(Gp)}\cap\Oc=\overline{G\tau(p)}\cap\Oc
	=\overline{G\xi}\cap\Oc=\Oc$$
	and this completes the proof. 
\end{proof}

\begin{proposition}
	\label{trans_P}
	For $\Oc\in(\gg^*/G)^\sim$ the following assertions are equivalent: 
	\begin{enumerate}[{\rm(i)}]
		\item\label{trans_P_item2}
		 $\Oc$ is a coadjoint orbit, that is, $\Oc \in\gg^*/G$. 
		\item\label{trans_P_item3} Every $\O \in(\Bun(\Oc)/G)^\approx$ is closed, 
		 that is, $(\Bun(\Oc)/G)^\approx=\Bun(\Oc)/G$. 
		 \item\label{trans_P_item4} There is $\O \in(\Bun(\Oc)/G)^\approx$ such that $\O$ is closed. 
		 \end{enumerate}
\end{proposition}

\begin{proof}
	``\eqref{trans_P_item2}$\Rightarrow$\eqref{trans_P_item3}'' 
We must prove that for arbitrarily fixed $p_0=(\xi_0,\chi_0)\in\Bun(\Oc)$, hence $\xi_0\in\Oc$ and $\chi_0\in\mathop{G}\limits^{\mathsout{\wedge}}(\xi_0)$,   
the $G$-orbit 
$$Gp_0=\{(g\xi_0,g\chi_0)\in\Bun(\Oc)\mid g\in G\}$$ 
is a closed subset of $\Bun(\Oc)$. 
It suffices to show that there exists a continuous $G$-equivariant cross-section $\sigma\colon \Oc\to\Bun(\Oc)$ through $p_0$ of the bundle projection 
$\tau\vert_{\Bun(\Oc)}\colon \Bun(\Oc)\to\Oc$, 
that is, the continuous mapping $\sigma$ satisfies the conditions 
$$
 \begin{cases}
 \sigma(g\xi)=g \sigma(\xi)  &  \text{for all  } \xi\in\Oc, \, g\in G,\\
  \tau(\sigma(\xi))=\xi &  \text{for all  } \xi\in\Oc, \, g\in G, \\
\sigma(\xi_0)=p_0.& 
\end{cases}
$$
Indeed, if such a continuous mapping $\sigma$ exists, then its image $\sigma(\Oc)$ is a closed subset of $\Bun(\Oc)$ (since $\tau\circ\sigma=\id_\Oc$) and on the other hand $\sigma(\Oc)=Gp_0$ 
(since $\Oc=G\xi_0$ and $\sigma$ is $G$-equivariant), and thus the $G$-orbit $g.p_0$ is closed in $\Bun(\Oc)$.

We show that the mapping 
\begin{equation}
\label{trans_proof_eq1}
\sigma\colon \Oc\to\Bun(\Oc),\quad \sigma(g\xi_0):=(g\xi_0,g\chi_0)
\end{equation}
is well defined and continuous. 
It will then directly follow that $\sigma$ is $G$-equivariant and satisfies $\sigma(\xi_0)=(\xi_0,\chi_0)=p_0$ and $\tau\circ\sigma=\id_\Oc$. 
Let us also note that  $\sigma$ is defined throughout $\Oc$ since $\Oc=G\xi_0$. 

In order to show that  $\sigma$ in \eqref{trans_proof_eq1} is well defined, we must check that if $g_1,g_2\in G$ and $g_1\xi_0=g_2\xi_0$, then $\sigma(g_1\xi_0)=\sigma(g_2\xi_0)$. 
To this end  it is enough to prove that if $g\in G$ and $g\xi_0=\xi_0$ then $g\chi_0=\chi_0$. 
In fact, the condition $g\xi_0=\xi_0$ means $g\in G(\xi_0)$, 
thus $g\chi_0=\chi_0$ by \eqref{invar}.
This completes the proof of the fact that  $\sigma$ is well defined.

To prove that  $\sigma\colon\Oc\to\Bun(\Oc)$ is smooth
we have to show  that for every admissible $b\colon \Oc \to K$ and for 
$$f_b\colon\Bun(\Oc)\to\TT, \quad f_b(\xi,\chi)=\chi(b(\xi)),$$ 
the corresponding mapping $f_b\circ\sigma\colon\Oc\to\TT $ is smooth. 
The orbit map $\alpha^{\xi_0}\colon G\to\Oc$, $\alpha^{\xi_0}(g):=g\xi_0$ is a submersion since $\Oc$ is locally compact. 
Therefore, 
it is enough to check that  $f_b\circ\sigma\circ\alpha^{\xi_0}\colon G\to\TT$ is smooth. 
For every $g\in G$ we have, by \eqref{trans_proof_eq1}, 
\begin{align*}
(f_b\circ\sigma\circ\alpha^{\xi_0})(g)
&=f_b(\sigma(g\xi_0))=f_b(g\xi_0,g\chi_0)=(g\chi_0)(b(g\xi_0))=\chi_0(g^{-1}b(g\xi_0)g) \\
&=\chi_0(g^{-1}b(\alpha^{\xi_0}(g))g)  \\
&=\varphi_{\chi_0,\xi_0}(g^{-1}b(\alpha^{\xi_0}(g))g)
\end{align*}
where $\varphi_{\chi_0,\xi_0}\in\Hom(K,\TT)$ satisfies $$\text{$\varphi_{\chi_0,\xi_0}\vert_{\overline{G}(\xi_0)}=\chi_0\in\Hom(\overline{G}(\xi_0),\TT)$ and $\varphi_{\chi_0,\xi_0}\vert_D=\ee^{\ie\xi_0\circ\log_D}\in\Hom(D,\TT)$, }
$$
cf. \cite[page 494]{Pu71}. 
The above equality shows that $f_b\circ\sigma\circ\alpha^{\xi_0}\colon G\to\TT$ is smooth, and this completes the proof of the implication. 

	The implication ``\eqref{trans_P_item3}$\Rightarrow$\eqref{trans_P_item4}''  is trivial.
	
	``\eqref{trans_P_item4}$\Rightarrow$\eqref{trans_P_item2}'' 
	This follows by Lemma~\ref{surj} since the mapping $\tau\colon\Bun(\gg^*)\to\gg^*$ is $G$-equivariant. 
\end{proof}

\begin{remark}\label{new_typeI}
\normalfont
Using Proposition~\ref{trans_P}, we can rewrite \eqref{open_cor1_proof_eq1}
as follows: if $\Oc \in (\gg^*/G)^\sim$ and  $\O\in(\Bun(\Oc)/G)^\approx$, then
\begin{equation}\label{2.1'}
\ell(\O)\text{ is type~$\I$} \iff 
	\Oc \in \gg^*/G \text{ and }
	[G(\xi):\overline{G}(\xi)]<\infty\text{ for some }\xi\in\Oc.
	\end{equation}
	In particular, since the condition in the right hand side of \eqref{2.1'} depends on $\Oc$ only, we get that if there is $\O_0 \in (\Bun(\Oc)/G)^\approx$ such that $\ell (\O_0)$ is type~$\I$, then $\ell(\O)$ is type~$\I$ for every $\O \in (\Bun(\Oc)/G)^\approx$. 
\end{remark}

\begin{theorem}\label{open_cor}
Let $G$ be a $1$-connected solvable Lie group with its Lie algebra $\gg$. 
Assume that there is a coadjoint orbit $\Oc\in\gg^*/G$ such that $\Oc$ is an open subset of $\gg^*$.
Then for every $\O\in (\Bun(\Oc)/G)^\approx$, 
its corresponding  quasi-equivalence class $\ell(\O)\in \stackrel{\frown}{G}_\nor$ is  type~$\I$. 
\end{theorem}

\begin{proof}
	Since $\Oc\in\gg^*/G$ and $\Oc\subseteq\gg^*$ is an open subset, 
	we have $\gg(\xi)=\{0\}$, that is, $G(\xi)_\1=\{\1\}$, 
for arbitrary $\xi\in\Oc$.   
Then, by \eqref{fin-gen}, $G(\xi)$ is a finitely generated free abelian group. 
It then follows by the definition of $\overline{G}(\xi)$ that $\overline{G}(\xi)=G(\xi)$. 
By hypothesis,  $\Oc\in\gg^*/G$, hence   
$\ell(\O)$ is type~$\I$ by \eqref{2.1'}.
\end{proof}

\begin{theorem}
\label{F_gen}
Let $G$ be a $1$-connected solvable Lie group with its Lie algebra $\gg$. 
Assume that there is a coadjoint orbit $\Oc\in\gg^*/G$ such that 
$\Oc$ is an open subset of~$\gg^*$.
The following assertions are equivalent: 
\begin{enumerate}[{\rm(i)}]
	\item\label{F_gen_item1} 
	$G(\xi)=\{\1\}$ for all $\xi\in\Oc$.
	\item\label{F_gen_item4} 
	The quasi-equivalence class $\ell(\O)\in \stackrel{\frown}{G}_\nor$ 
	is square integrable. 
	\item\label{F_gen_item2} 
	The mapping $\tau\vert_{\Bun(\Oc)}\colon \Bun(\Oc)\to\Oc$ is injective. 
	\item\label{F_gen_item3} 
    $(\Bun(\Oc)/G)^\approx=\{\O\}$.
 \end{enumerate}
If these conditions are satisfied, then the centre of $G$ is trivial. 
\end{theorem}

\begin{proof}
	``\eqref{F_gen_item1}$\Leftrightarrow$\eqref{F_gen_item2}'' 
For every $\xi\in\Oc$, its corresponding fibre $\tau^{-1}(\xi)$ is homeomorphic to
the torus which is the dual group of the finitely-generated free abelian group $\overline{G}(\xi)/G(\xi)_\1$, 
cf. \cite[Ch. II, \S 6.3.a, page 537]{Pu71}. 
The proof of Theorem~\ref{open_cor} shows that $\overline{G}(\xi)/G(\xi)_\1=G(\xi)$,
hence $G(\xi)=\{\1\}$ if and only if the fibre $\tau^{-1}(\xi)$ consists of only
one point. 

The implication ``\eqref{F_gen_item2}$\Rightarrow$\eqref{F_gen_item3}'' is clear.

``\eqref{F_gen_item1}$\Rightarrow$\eqref{F_gen_item4}'' 
If  $G(\xi)=\{\1\}$ for some $\xi\in\Oc$, then 
$\rk(\xi)=0$.
 This shows that the mapping 
$\tau\vert_{\Bun(\Oc)}\colon\Bun(\Oc)\to\Oc$ is bijective. 
Since $\tau$ is $G$-equivariant, it then follows by Proposition~\ref{trans_P} that $\O=\Bun(\Oc)$. 
The group $G$ has trivial centre, since it is contained in $G(\xi)=\{1\}$.
Therefore $[\pi]^\frown\in \stackrel{\frown}{G}_\nor$ is square integrable by Theorem~\ref{open_th}.

``\eqref{F_gen_item3}$\Rightarrow$\eqref{F_gen_item2}'' 
Since $\Oc\in\gg^*/G$, it follows by the proof of Proposition~\ref{trans_P} 
that there exists a continuous $G$-equivariant mapping $\sigma\colon\Oc\to\Bun(\Oc)$ with $\tau\circ\sigma=\id_{\Oc}$. 
This implies that $\sigma(\Oc)\subseteq\Bun(\Oc)$ is a closed $G$-invariant subset, in particular $\sigma(\Oc)\in (\Bun(\Oc)/G)^\approx$. 
But $(\Bun(\Oc)/G)^\approx=\{\Bun(\Oc)\}$, hence $\sigma(\Oc)=\Bun(\Oc)$. 
Since $\sigma$ is a continuous cross-section of the mapping  $\tau\vert_{\Bun(\Omega)}\colon\Bun(\Omega)\to\Omega$, it then follows 
that the fibres of $\tau\vert_{\Bun(\Omega)}$ are singletons.

``\eqref{F_gen_item4}$\Rightarrow$\eqref{F_gen_item3}'' 
This follows from Theorem~\ref{open_th}.

Finally, since the centre of $G$ is contained in  $G(\xi)=\{\1\}$ for any $\xi\in\Oc$, 
if condition \eqref{F_gen_item1} is satisfied then the centre of $G$ is trivial.  
\end{proof}

\begin{corollary}\label{open_cor2}
	If $G$ is a 1-connected solvable Lie group, 
	then $[\pi]^\frown\mapsto \tau(\ell^{-1}([\pi]^\frown))$ gives a one-to-one correspondence between the following sets: 
	\begin{itemize}
		\item the equivalence classes of square-integrable irreducible representations of $G$;
		\item\label{sq} the type~$\I$ square-integrable 
		quasi-equivalence classes $[\pi]^\frown\in \stackrel{\frown}{G}_\nor$; 
		\item\label{orb} the simply connected open coadjoint orbits of $G$. 
	\end{itemize}
\end{corollary}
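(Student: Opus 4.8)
The plan is to establish the claimed bijection by combining the two previous corollaries, so that each constituent piece is already in place and the only real work is to identify the correct target set and to verify injectivity and surjectivity of the map. First I would fix the source set $S$ of type~I square-integrable classes $[\pi]^\frown\in\stackrel{\frown}{G}_\nor$. For any such class, write $[\pi]^\frown=\ell(\O)$ with $\O\in(\Bun(\Oc)/G)^\approx$ for a unique $\Oc\in(\gg^*/G)^\sim$, using that $\ell$ is a bijection. By Theorem~\ref{open_th}, square integrability forces $\O=\Bun(\Oc)$ and $\Oc\subseteq\gg^*$ open; by Corollary~\ref{open_cor1}, the type~I hypothesis upgrades this to $\Oc\in\gg^*/G$, i.e.\ $\Oc$ is a genuine coadjoint orbit, and moreover $G(\xi)=\{\1\}$ for every $\xi\in\Oc$. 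Thus $\tau(\ell^{-1}([\pi]^\frown))=\tau(\O)=\tau(\Bun(\Oc))=\Oc$ is an open coadjoint orbit.

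Next I would check that $\Oc$ is simply connected. Since $G(\xi)=\{\1\}$ for $\xi\in\Oc$, the orbit map $G\to\Oc$, $g\mapsto g\xi$, is a bijective continuous $G$-equivariant map, and in fact a diffeomorphism $G/G(\xi)=G\xrightarrow{\sim}\Oc$ onto the immersed orbit (using that an open orbit is locally closed, hence an embedded submanifold). As $G$ is connected and simply connected by standing hypothesis, $\Oc$ is simply connected as well. This shows that the map $[\pi]^\frown\mapsto\Oc$ lands in the target set of simply connected open coadjoint orbits, and makes the forward direction well defined.

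For injectivity, suppose two classes map to the same open orbit $\Oc$. Each class equals $\ell(\Bun(\Oc))$ because square integrability identified $\O$ with all of $\Bun(\Oc)$, and the decomposition $\RelS=\bigcup_\Oc(\Bun(\Oc)/G)^\approx$ together with Corollary~\ref{open_cor1}'s conclusion $(\Bun(\Oc)/G)^\approx=\{\Bun(\Oc)\}$ shows there is a \emph{unique} generalized orbit over $\Oc$; since $\ell$ is injective, the two classes coincide. For surjectivity, I would start from an arbitrary simply connected open coadjoint orbit $\Oc=G\xi\in\gg^*/G$. Being open, $\Oc$ forces $\gg(\xi)=\{0\}$, so $G(\xi)_\1=\{\1\}$; simple connectedness of $\Oc\cong G/G(\xi)$ together with simple connectedness of $G$ forces $G(\xi)$ to be connected, whence $G(\xi)=\{\1\}$. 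Viewing $\Oc$ as a quasi-orbit (an open orbit is its own closure relative to itself, so $\Oc\in(\gg^*/G)^\sim$), Theorem~\ref{open_th} applies in the direction \eqref{open_th_item2}$\Rightarrow$\eqref{open_th_item1} with $\O:=\Bun(\Oc)$, yielding that $\ell(\Bun(\Oc))$ is square integrable; Corollary~\ref{open_cor1} then gives that it is type~I because $\Oc\in\gg^*/G$. Thus $\ell(\Bun(\Oc))\in S$ and maps to $\Oc$, establishing surjectivity.

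The main obstacle I anticipate is the simple-connectedness bookkeeping in both directions: one must be careful that ``open coadjoint orbit'' is taken with its manifold structure as an open subset of $\gg^*$ (so that $\pi_1(\Oc)$ is the relevant invariant), and that the identification $\Oc\cong G/G(\xi)$ is a genuine diffeomorphism rather than merely a continuous bijection, so that triviality of $G(\xi)$ and simple connectedness of $G$ can be transferred to $\Oc$ and conversely. Once this topological point is pinned down, the rest is a formal assembly of Theorem~\ref{open_th} and Corollary~\ref{open_cor1}.
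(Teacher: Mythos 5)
Your outline follows the paper's own proof quite closely: the forward direction via Theorem~\ref{open_th} plus Corollary~\ref{open_cor1} together with the diffeomorphism $G\simeq G/G(\xi)\to\Oc$, the converse via connectedness of $G(\xi)$ deduced from simple connectedness of $G$ and of $\Oc$, and injectivity from the uniqueness of the generalized orbit $\Bun(\Oc)$ over a given $\Oc$. The forward direction and the injectivity argument are sound as written.

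There is, however, a gap in your surjectivity step. You invoke Theorem~\ref{open_th} in the direction \eqref{open_th_item2}$\Rightarrow$\eqref{open_th_item1} with $\O:=\Bun(\Oc)$, but condition \eqref{open_th_item2} consists of \emph{three} clauses: $G$ has trivial centre, $\Oc\subseteq\gg^*$ is open, and $\O=\Bun(\Oc)$. You check openness, but you never verify the trivial-centre clause, and you never verify that $\Bun(\Oc)$ is actually an element of $(\Bun(\Oc)/G)^\approx$; the latter is both a standing hypothesis of the theorem and necessary for $\ell(\Bun(\Oc))$ to be defined at all, since $\ell$ is defined only on generalized orbits, i.e.\ on $G$-orbit closures in $\Bun(\Oc)$. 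Neither point is vacuous. A connected, simply connected solvable Lie group can have nontrivial (necessarily discrete) centre --- e.g.\ the universal cover of the Euclidean motion group of the plane, whose centre is isomorphic to $\ZZ$ --- so triviality of the centre of $\gg$ (which does follow from the existence of an open orbit) does not by itself give triviality of the centre of $G$. The paper disposes of this issue at the very start by observing that if the centre of $G$ is nontrivial then \emph{both} sets in the statement are empty, so one may assume trivial centre throughout; and it settles the second point by noting that $G(\xi)=G(\xi)_\1=\{\1\}$ gives $\rk(\xi)=0$, hence $\tau\vert_{\Bun(\Oc)}$ is bijective, hence transitivity of $G$ on $\Oc$ lifts to transitivity on $\Bun(\Oc)$, so that $\Bun(\Oc)$ is a single $G$-orbit and therefore a legitimate element of $(\Bun(\Oc)/G)^\approx$. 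Both gaps are in fact fillable from your own intermediate result $G(\xi)=\{\1\}$: the centre $Z$ of the connected group $G$ is the kernel of $\Ad$, so it acts trivially in the coadjoint representation and $Z\subseteq G(\xi)=\{\1\}$, while the transitivity argument just quoted supplies the remaining clause. But as written, the appeal to Theorem~\ref{open_th} is not justified, and this is exactly the bookkeeping the paper's proof is organized to handle.
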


\begin{proof}
We first note that if $G$ has a nontrivial centre, the above sets are empty.
Therefore we may assume that $G$ has trivial centre. 
We also recall the canonical one-to-one correspondence between the quasi-equivalence classes of type~$\I$ factor representations and the unitary equivalence classes of irreducible representations, cf. \cite[Prop. 5.4.11 and 5.3.3]{Di64}. 

Let $[\pi]^\frown\in \stackrel{\frown}{G}_\nor$, $\Oc\in (\gg^*/G)^\sim$, and  $\O\in (\Bun(\Oc)/G)^\approx$ with  $\ell(\O)=[\pi]^\frown$. 
In particular $\Oc=\tau(\ell^{-1}([\pi]^\frown))$.

If  
$[\pi]^\frown\in \stackrel{\frown}{G}_\nor$ is square integrable and type~$\I$, 
then $\Oc\subseteq\gg^*$ is open by Theorem~\ref{open_th}, 
$\Oc\in\gg^*/G$ and $G(\xi)=\{\1\}$ for any $\xi\in\Oc$ by Theorem~\ref{F_gen}. 
Hence $\Oc$ is an open coadjoint orbit and  the mapping 
$G\to\Oc$, $g\mapsto g\xi$ is a diffeomorphism, so $\Oc$ is simply connected as well. 

Conversely, assume that  $\Oc\in\gg^*/G$  and it is open and simply connected.
Since both $G$ and $\Oc$ are simply connected, $G(\xi)$ is connected for any $\xi \in \Oc$. 
Then $G(\xi)_\1=G(\xi)=\{\1\}$.
By Theorem~\ref{F_gen} (\eqref{F_gen_item1}$\Rightarrow$\eqref{F_gen_item4}) we get that 
$[\pi]^\frown=\ell(\O)$ is square integrable, and type~$I$  
by 
Theorem~\ref{open_cor}.
 hence $\rk(\xi)=0$, and this shows that the mapping 
$\tau\vert_{\Bun(\Oc)}\colon\Bun(\Oc)\to\Oc$ is bijective. 
By Proposition~\ref{trans_P} (\eqref{trans_P_item2}$\Rightarrow$\eqref{trans_P_item3})
we obtain
$\O=\Bun(\Oc)$. 
The group $G$ has trivial centre by assumption, hence $[\pi]^\frown\in \stackrel{\frown}{G}_\nor$ is square integrable by Theorem~\ref{open_th}.

Finally, if either  $\Oc\in\gg^*/G$ is an open simply connected coadjoint orbit, or $[\pi]^\frown\in \stackrel{\frown}{G}_\nor$ is square integrable and type~$\I$, then we have seen above that $G(\xi)=\{\1\}$ and $\tau\vert_{\Bun(\Oc)}\colon\Bun(\Oc)=\O\to\Oc$ is bijective, hence 
$\O$ is uniquely determined by~$\Oc$. 
\end{proof}

\subsection{Open coadjoint orbits and compact open subsets of $\Prim(G)$}\label{open_subsect2}

Let now $\Oc\in\gg^*/G$ be a coadjoint orbit that is an open subset of $\gg^*$. 
Since the $G$-orbits in $\Bun(\Oc)$ are closed, the map
$$ \kk_\Oc\colon \Bun(\Oc)/G\to\Prim(G), \; \O \mapsto J(\O)$$ 
is injective  (since the map \eqref{J} is bijective) and continuous by \eqref{Jc}.

\begin{theorem}
\label{homeo}
Let $G$ be a 1-connected solvable Lie group. 
If a coadjoint orbit $\Oc\in\gg^*/G$ is an open subset of $\gg^*$, 
then the mapping $\kk_\Oc\colon\Bun(\Oc)/G\to\Prim(G)$ 
is open, and a homeomorphism onto its image. 
Moreover, its image is transitively acted on by the character group~$X(G)$. 
\end{theorem}

\begin{proof}
We prove first that $\kk_\Oc$ is  a homeomorphism onto its image.

The  group 
$X(G)\simeq \widehat{G/D}$ 
acts canonically on $\Prim(G)$ by 
\begin{equation}
	\label{homeo_proof_eq0}
	X(G)\times\Prim(G)\to\Prim(G),\quad (\chi,J)\mapsto\chi\cdot J
\end{equation}
cf., e.g., \cite[Thm. 3.6]{BB21b}. 
Recall that $\chi\cdot J:=\Ker(\chi T)\in\Prim(G)$ if $T\colon G\to\Bc(\Hc)$ is a factor unitary representation with $\Ker T=J\in\Prim(G)$, while $\chi T\colon G\to\Bc(\Hc)$, $g\mapsto \chi(g)T(g)$. 
The relative topology of every orbit of the group action~\eqref{homeo_proof_eq0} is Hausdorff by \cite[Lemma 1]{B82}. 

We already know that $\kk_\Oc$ is continuous, injective, and its domain $\Bun(\Oc)/G$ is compact and Hausdorff. 
Therefore, it suffices to show that the relative topology of its image  $\kk_\Oc(\Bun(\Oc)/G)\subseteq\Prim(G)$ is Hausdorff. 
Furthermore, by the remarks above, it is enough to show that $\kk_\Oc(\Bun(\Oc)/G)$ is an $X(G)$-orbit. 

To this end we fix $\xi_0\in\Oc$ and we denote $p_0:=(\xi_0,\1_{G(\xi_0)})\in\Bun(\Oc)$, 
where $\1_{G(\xi_0)}:=\1_G\vert_{G(\xi_0)}$. 
We  show that for every $p\in \Bun(\Oc)$, 
\begin{equation}
\label{homeo_proof_eq1}
\Jc(p)\in X(G)\cdot\Jc(p_0), 
\end{equation}
that is, 
$$
\kk_\Oc(\Bun(\Oc)/G)=X(G)\cdot\Jc(p_0).
$$
Since $G(\xi_0)\cap D=\{\1\}$, the quotient map $G\to G/D$ 
gives an isomorphism of $G(\xi_0)$ onto a certain discrete subgroup of $G/D$. 
Since $G/D$ is isomorphic to a vector group, it then  follows that 
every character of~$G(\xi_0)$ extends to a character of~$G$. 
That is, the restriction map 
$$X(G)\to \widehat{G(\xi_0)}, \quad \chi\mapsto \chi\vert_{G(\xi_0)}$$ 
is surjective.  

Let $\chi_0\in \widehat{G(\xi_0)}$ be arbitrary, and denote $p: =(\xi_0, \chi_0)$.
Pick $\chi\in X(G)$ with 
$\chi_0=\chi\vert_{G(\xi_0)}$ and let $\sigma\in \dg^*$ be such that $\ie \sigma=d\chi$. 
Then, by the definition of the action of $X(G)$ in $\Bun(\gg^*)$, we may write
$$ p =(\xi_0, \chi_0)= \chi(\xi_0-\sigma,\1_{G(\xi_0)} ).$$
Therefore, by \eqref{1002} we have that 
\begin{equation}\label{1003}
\lambda(p)= \chi\vert_{K} \lambda((\xi_0-\sigma, \1_{G(\xi_0)})).
\end{equation}
On the other hand, $\gg(\xi_0)=\{0\}$, hence $g(\xi_0)+\dg= \dg$ and 
$ (\xi_0-\sigma)\vert_{\gg(\xi_0)+\dg} = \xi_0\vert_{\gg(\xi_0)+\dg}$.
Then, by \cite[Lemma~6.2, page 501]{Pu71}, there is $a\in G(\xi_0\vert_\dg)_\1$ such that
$ \xi_0-\sigma = a \xi_0.$. 
Moreover, $a\1_{G(\xi_0)}= \1_{G(\xi_0)}$.
Therefore, \eqref{1003} becomes
$$
\lambda(p)= \chi\vert_{K} \lambda(a p_0) = \chi\vert_{K} \lambda( p_0), 
$$
where we have used \eqref{1001} to obtain the last equality. 
Thus we get that
\begin{equation}\label{1004}
T(p) =\Ind_K^G \lambda(p) =\Ind_K^G (\chi\vert_{K} \lambda( p_0))= \chi \Ind_K^G \lambda(p_0)=\chi T(p_0),
\end{equation}
where the latter equality follows from \cite[Thm. 2.58]{KT13}. 
Taking kernels in 
both sides of \eqref{1004}, we obtain \eqref{homeo_proof_eq1}.

Next we show that  $\kk_\Oc$ is open. 
Recall that $R\colon \Prim(G) \to (\widehat{D}/G)^\approx$ is the continuous surjective mapping from  \cite[Lemma 3.5]{BB21b}, defined by $R(J): =\supp T\vert_{D}$ for every
$J\in \Prim(G)$ and $T$ factor representation such that $J=\Ker T$.
Consider also
the map $Q\colon \widehat{D}\to (\widehat{D}/G)^\approx$, 
$Q(\pi):=\overline{G\pi}$ for every $\pi\in\widehat{D}$. 
(This is actually  $\iota\circ Q$ in \cite[Lemma 2.3]{BB21b}.)
Let $q_D\colon \dg^* \to \dg^*/D$ be the canonical quotient map, and denote
$\nu\colon \gg^*\to \widehat{D}$, $\nu := \kappa_D\circ q_D\circ\iota^*$.
Then, if  $\varepsilon \colon \Bun(\gg^*) \to \widehat{D}$,  $\varepsilon (p) = \nu(\tau (p))$,  
 the diagram 
\begin{equation}\label{diag}
	\xymatrix{
\dg^* \ar[d]_{q_D} & \gg^* \ar[l]_{\iota^*} \ar[dr]_{\nu}& \Bun(\gg^*) \ar[l]_{\tau} \ar[r]^{\Jc} \ar[d]^{\varepsilon} & \Prim(G) \ar[d]^{R} \\
	\dg^*/D \ar[rr]_{\kappa_D} & & \widehat{D} \ar[r]^{Q} & (\widehat{D}/G)^\approx
}
\end{equation}
is commutative
  since we have 
\begin{equation}
	\label{restr}
	R(\Jc(p))=\overline{G\varepsilon(p)}\; \; \text{for all }  p\in\Bun(\gg^*)), 
\end{equation}
as noted in \cite[III.3.d.3]{Pu73}.
To show that $\kk_\Oc$ is open, 
 it is enough to show that
\begin{equation}\label{claim2}
 \Jc(\Bun(\Oc)) =R^{-1}((Q\circ\nu)(\Oc)).
\end{equation}
Indeed, 
$R$ is continuous, the subset $\nu(\Oc) = (\kappa_D\circ q_D\circ\iota^*)(\Oc)=G\pi_0\subseteq\widehat{D}$ is open, and 
	the mapping $Q$ is open by \cite[Lemma 2.3]{BB21b}, 
	therefore equality~\eqref{claim2} shows that
$\Jc(\Bun(\Oc))=\kk_\Oc(\Bun(\Oc)/G)$ is an open subset of $\Prim(G)$. 
Using that $\kk_\Oc$ is a homeomorphism onto its image we get that $\kk_\Oc$ is open.

We prove now equality \eqref{claim2}.
\\
	``$\subseteq$'': 
The above diagram \eqref{diag} is commutative, hence
	$$R\circ\Jc=Q\circ\varepsilon=Q\circ \nu \circ\tau.$$
Then, since $\tau(\Bun(\Oc))=\Oc$, we obtain 
$$R(\Jc(\Bun(\Oc)))
=(Q\circ \nu \circ\tau)(\Bun(\Oc))
	=(Q\circ\nu)(\Oc)
$$	
and this proves the inclusion $\subseteq$. 
	
	``$\supseteq$'': 
	We must prove that for arbitrary $J\in\Prim(G)$ with
	\begin{equation}
		\label{res_prop_proof_eq1}
		R(J)\in(Q\circ\nu )(\Oc)
	\end{equation}
	there exists $p\in\Bun(\Oc)$ with  $J=\Jc(p)$. 
	Since the mapping $\Jc$ is surjective, we already know that there exists $p=(\xi,\chi)\in\Bun(\gg^*)$ with $\Jc(p)=J$.  
	We claim that 
	\begin{equation}\label{claim}
\xi=\tau(p)\in\Oc, 
	\end{equation}
	that is, $p\in\Bun(\Oc)$.
	
	We prove \eqref{claim}.
By \eqref{res_prop_proof_eq1} there is $\xi_0\in\Oc$ such that 
$$R(J)=(Q\circ\nu )(\xi_0) = \overline{G\pi_0}, $$
where
$$\pi_0:=\nu (\xi_0)=\kappa_D(D\xi_0\vert_\dg)\in\widehat{D}.$$ 
On the other hand, \eqref{restr} shows that
$$R(J)=R(\Jc(p))=\overline{G\pi}$$
where $\pi:=\varepsilon(p)=\kappa_D(D\xi\vert_D)\in\widehat{D}$. 
		Therefore $\overline{G\pi}=\overline{G\pi_0}$; in particular $\pi_0\in\overline{G\pi}$. 
	
	In addition, $\Oc=G\xi_0$, and since 
	the mappings $\kappa_D,q_D,\iota^*$ are $G$-equivariant, we have
	$$
	\nu (\Oc)=(\kappa_D\circ q_D\circ\iota^*)(G\xi_0)=
	G(\kappa_D\circ q_D\circ\iota^*)(\xi_0)= G \nu (\xi_0)= G\pi_0.
	$$
	Then, as $\Oc\subseteq\gg^*$ is an open subset and $\kappa_D,q_D,\iota^*$ are open mappings, it follows that the orbit $G\pi_0$ is an open neighbourhood of $\pi_0$. 
	Hence the relation $\pi_0\in\overline{G\pi}$ implies $G\pi_0\cap G\pi\ne\emptyset$, thus
	$G\pi_0=G\pi$. 
	Consequently, there exists $g\in G$ with $\pi=g\pi_0$. 
	Since the Kirillov homeomorphism $\kappa_D$ is $G$-equivariant, 
	it then follows by the definition of $\pi_0$ and $\pi$ that $\xi\vert_\dg\in Dg\xi_0\vert_\dg$. 
	This implies $\xi\in Dg\xi_0+\dg^\perp\subseteq G\xi_0+\dg^\perp$. 
	Since $G\xi_0\subseteq\gg^*$ is open, we have $G\xi_0=G\xi_0+\dg^\perp$ by 
	\cite[Lemma 3.4]{BB21b}, 
	hence $\xi\in G\xi_0=\Oc$. 
	This completes the proof of \eqref{claim} and of \eqref{claim2}, and of the theorem. 
	\end{proof}

\begin{corollary}\label{compop}
If $\Oc\in\gg^*/G$ and $\Oc$ is an open subset of $\gg^*$, then 
$\kk_\Oc(\Bun(\Oc)/G)$ is a compact open subset of $\Prim(G)$.
\end{corollary}

\begin{proof}
We consider the following space of continuous mappings 
$$\Gamma_G(\Oc):=\{\sigma\in\Cc(\Oc,\Bun(\Oc))\mid\tau\circ\sigma=\id_\Oc\text{ and }\sigma\text{ is $G$-equivariant}\}$$
endowed with the topology of pointwise convergence. 
For arbitrary $\xi_)\in\Oc$ we define the evaluation map $$\ev_{\xi_0}\colon\Gamma_G(\Oc)\to\tau^{-1}(\xi_0),\quad \ev_{\xi_0}(\sigma):=\sigma(\xi_0)$$ 
and the range map 
$$\Ran_\Oc\colon\Gamma_G(\Oc)\to\Bun(\Oc)/G,\quad \Ran_\Oc(\sigma):=\sigma(\Oc).$$
As a by-product of the proof of Proposition~\ref{trans_P}, the mappings $\ev_\Oc$ and $\Ran_\Oc$ are well-defined homeomorphisms, hence we obtain the homeomorphism 
$$\Ran_\Oc\circ(\ev_{\xi_0})^{-1}\colon \tau^{-1}(\xi_0)\to\Bun(\Oc)/G.$$
Here $\tau^{-1}(\xi_0)=\{\xi_0\}\times\widehat{G(\xi_0)}$, 
hence $\Bun(\Oc)/G$ is homeomorphic to the torus $\widehat{G(\xi_0)}$. 
In particular, in Theorem~\ref{homeo}, the open subset $\kk_\Oc(\Bun(\Oc)/G)\subseteq\Prim(G)$ is also the continuous image through $\kk_\Oc$ 
of the torus $\Bun(\Oc)/G$, hence $\kk_\Oc(\Bun(\Oc)/G)$ is a compact open subset of $\Prim(G)$. 
\end{proof}

\section{Solvable Lie groups with nilradicals of  codimension $1$}\label{sect6} 
In this section we use Theorem~\ref{open_th} to  show that every square integrable representation of a solvable Lie group is type~$\I$ if the nilradical has codimension~1 (Theorem~\ref{codim1_th}).  
We achieve that result via the description of the type-$\I$ property in terms of the coadjoint action (Theorem~\ref{F_gen}, \ref{open_cor2}). 

Our next application of Theorem~\ref{open_th} is Theorem~\ref{codim1_th} and gives an improvement of \cite[Thm. 4.5]{KT96}. 
To this end we recall the following facts. 

\begin{remark}\label{isolated}
	\normalfont 
	The mapping 
	$$\ker\colon \stackrel{\frown}{G}_\nor\to\Prim(G),$$
	which takes every quasi-equivalence class in $\stackrel{\frown}{G}_\nor$ 
	to the kernel in $C^*(G)$ of any representation of  that class,  
	is bijective by \cite[Thm. 1, page 119]{Pu74}. 
	It is known from \cite{Gr80} that the mapping $\ker$ 
	gives a one-to-one correspondence between  
	\begin{itemize}
		\item the square-integrable 
		classes $[\pi]^\frown\in \stackrel{\frown}{G}_\nor$; 
		\item the isolated points of $\Prim(G)$. 
	\end{itemize} 
	A point $\Pc\in\Prim(G)$ is isolated if and only if 
	the subset $\{\Pc\}\subseteq\Prim(G)$ is open. 
	
	A primitive ideal $\Pc\in\Prim(G)$ is said to be \emph{type~$\I$} if 
	its corresponding $[\pi]^\frown\in \stackrel{\frown}{G}_\nor$ with $\ker([\pi]^\frown)=\Pc$ is type~$\I$ or, equivalently, if there exists 
	a normal irreducible representation $\pi$ of $G$ with $\ker([\pi]^\frown)=\Pc$. 
	(See \cite[\S 3, Lemma~3.1]{Pu74}.)
\end{remark}

\begin{theorem}\label{codim1_th}
If $G$ is a $1$-connected solvable Lie group with its nilradical $N$ and $\dim(G/N)=1$, then the isolated points of  the primitive ideal spectrum $\Prim(G)$ are exactly the kernels in $C^*(G)$ of the square-integrable irreducible representations of $G$. 
 \end{theorem}

To prove Theorem~\ref{codim1_th} we are going to show that, in the conditions of the theorem, every open 
coadjoint quasi-orbit is actually an orbit, and then use Theorem~\ref{open_cor}.
This requires several lemmas, in addition to Theorem~\ref{open_th}. 

\begin{lemma}\label{codim1_lemma1}
	Let $\ng$ be a nilpotent Lie algebra with its corresponding nilpotent Lie group $N=(\ng,\cdot)$. 
	If $D\in\Der(\ng)$ and $\alpha\colon\RR\to\Aut(N)$, $\alpha_t:=\ee^{tD}$, then the coadjoint action of the semidirect product $G:=N\rtimes_{\alpha}\RR$ is given by 
	$$\Ad_G^*((x,t)^{-1})\colon \ng^*\dotplus\RR\to\ng^*\dotplus\RR,
	\quad \Ad_G^*((x,t)^{-1})=
	\begin{pmatrix}
	\ee^{tD^*}\Ad_N^*(-x) & 0 \\
	\psi(\ad_\ng x)Dx & 1
	\end{pmatrix}$$
	for arbitrary $x\in\ng$ and $t\in\RR$, 
	where we regard $\psi(\ad_\ng x)Dx\in\ng$ as a linear functional on $\ng^*$ via the canonical isomorphism $\ng\simeq(\ng^*)^*$, and 
	$\psi(z):=-\sum\limits_{k\ge 0}\frac{1}{(k+1)!}z^k$ for all $z\in\CC$. 
\end{lemma}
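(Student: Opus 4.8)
The plan is to compute the coadjoint action directly from the definition $\Ad_G^*(g)\xi = \xi\circ\Ad_G(g^{-1})$, working with the semidirect product $G = N\rtimes_\alpha\RR$ whose Lie algebra is $\gg = \ng\dotplus\RR$ with bracket $[(x,s),(y,u)] = ([x,y]_\ng + sDy - uDx,\,0)$. First I would write down the group multiplication $(x,t)(y,u) = (x\cdot\alpha_t(y),\,t+u)$ and invert it to get $(x,t)^{-1} = (\alpha_{-t}(x^{-1}),-t)$, where $x^{-1}$ denotes the inverse in $N=(\ng,\cdot)$. The key to avoiding the Baker--Campbell--Hausdorff morass is to compute $\Ad_G((x,t))$ in block form with respect to the decomposition $\gg = \ng\dotplus\RR$, then dualize.

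Next I would assemble $\Ad_G((x,t))$ as a product of two simpler automorphisms. Writing $(x,t) = (x,0)(0,t)$, one has $\Ad_G((x,t)) = \Ad_G((x,0))\,\Ad_G((0,t))$. The factor $\Ad_G((0,t))$ acts as $\ee^{tD}$ on $\ng$ (since $\alpha_t = \ee^{tD}$) and as the identity on the $\RR$-summand, while its mixed entries vanish because $(0,t)$ is central in the $\RR$-direction. The factor $\Ad_G((x,0))$ is the adjoint action of the subgroup $N$, extended to $\gg$: on the $\ng$-part it is $\Ad_N(x)$, and the crucial off-diagonal term comes from $\Ad_G((x,0))(0,1)$, which I would compute via $\Ad_N(\exp x)(0,1) = \sum_{k\ge0}\frac{1}{k!}(\ad_\gg x)^k(0,1)$. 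Since $[x,(0,1)]_\gg = (-Dx,0)$ and then $(\ad_\gg x)$ applied to $(-Dx,0)$ is $(-\ad_\ng x\,Dx,0)$, the series telescopes into $(\psi(\ad_\ng x)Dx,\;1)$ after reindexing, which is exactly where the function $\psi(z) = -\sum_{k\ge0}\frac{1}{(k+1)!}z^k$ enters. This is the one genuinely non-routine identity and I expect it to be the main obstacle: one must carefully track that $\ad_\gg x$ preserves the $\ng$-summand on these elements and verify the shift of index that converts $\sum\frac{1}{(k+1)!}(\ad_\ng x)^k$ into $\psi$.

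Having $\Ad_G((x,t))$ in lower-triangular block form, I would take the transpose-inverse at the level of the inverse element to match the stated formula for $\Ad_G^*((x,t)^{-1})$. Concretely, $\Ad_G^*((x,t)^{-1}) = \Ad_G((x,t))^*$ as operators on $\gg^* = \ng^*\dotplus\RR$, and dualizing the block matrix sends the $\ng\to\ng$ block $\ee^{tD}\Ad_N(x)$ to $\Ad_N^*(-x)\ee^{tD^*}$ on $\ng^*$ — here one uses $\Ad_N^*(x)^* $-conventions and the identity $\Ad_N(x)^* = \Ad_N^*(-x)$ together with $(\ee^{tD})^* = \ee^{tD^*}$ — reproducing the top-left entry $\ee^{tD^*}\Ad_N^*(-x)$. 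The off-diagonal term $\psi(\ad_\ng x)Dx\in\ng$, viewed as a functional on $\ng^*$, lands in the bottom row, and the scalar block remains $1$, giving the asserted matrix. I would close by remarking that the whole computation is an explicit, convergent rearrangement valid because $N$ is nilpotent, so $\ad_\ng x$ is nilpotent and every series in play is a finite sum.
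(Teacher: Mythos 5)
Your proposal is correct, but it reaches the block matrix for $\Ad_G(x,t)$ by a genuinely different route than the paper. The paper works at the group level: it computes the conjugation $(x,t)\cdot(y,s)\cdot(x,t)^{-1}=(x\cdot\alpha_t(y)\cdot\alpha_s(-x),s)$ explicitly, differentiates at $(y,s)=(0,0)$, and evaluates the resulting derivative $\frac{\de}{\de s}\bigl\vert_{s=0}(x\cdot\ee^{sD}(-x))$ by invoking Bourbaki's expansion $(u+v)\cdot(-u)=\sum_{k\ge 0}\frac{1}{(k+1)!}(\ad_\ng u)^k v+O(v^2)$ in the nilpotent group $(\ng,\cdot)$; the function $\psi$ emerges from that group-level estimate. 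You instead work at the Lie algebra level: you factor $(x,t)=(x,0)(0,t)$, observe that $\Ad_G((0,t))$ is block diagonal with blocks $\ee^{tD}$ and $1$, and compute the crucial column $\Ad_G((x,0))(0,1)$ from the identity $\Ad_G(\exp_G X)=\ee^{\ad_\gg X}$ (valid here since $\exp_G(x,0)=(x,0)$ in the model $N=(\ng,\cdot)$) together with the semidirect-product bracket $[(x,0),(0,1)]=(-Dx,0)$; the reindexing $\sum_{k\ge 1}\frac{1}{k!}(\ad_\ng x)^{k-1}=\sum_{j\ge 0}\frac{1}{(j+1)!}(\ad_\ng x)^{j}$ then produces $\psi$. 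Both arguments are rigorous: your series are finite sums because $\ad_\gg(x,0)$ maps $\gg$ into $\ng$ and restricts to the nilpotent operator $\ad_\ng x$ there. What your route buys is the elimination of the Bourbaki reference and its $O(v^2)$ bookkeeping, replaced by the standard purely algebraic formula $\Ad\circ\exp=\ee^{\ad}$; what the paper's route buys is that it never needs the identification of $\exp_G$ with the semidirect-product coordinates, reading everything off the explicit conjugation formula.

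One slip you should fix in the write-up: with your factorization $\Ad_G((x,t))=\Ad_G((x,0))\,\Ad_G((0,t))$, the $\ng\to\ng$ block is $\Ad_N(x)\circ\ee^{tD}$, not $\ee^{tD}\Ad_N(x)$ as you wrote; these differ in general, since $\ee^{tD}$ and $\Ad_N(x)$ need not commute. Dualizing the correct block, with the order reversal $(AB)^*=B^*A^*$, gives $\bigl(\Ad_N(x)\ee^{tD}\bigr)^*=\ee^{tD^*}\Ad_N^*(-x)$, which is exactly the top-left entry in the statement. Your written chain $\bigl(\ee^{tD}\Ad_N(x)\bigr)^*=\Ad_N^*(-x)\ee^{tD^*}$ is internally consistent but does not equal the claimed entry, so as it stands the last paragraph asserts a false identification; correcting the block order removes the problem without any other change to the argument.
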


\begin{proof}
	For all $t,s\in\RR$ and $x,y\in\ng$ we have 
	$(x,t)\cdot(y,s)=(x\cdot\alpha_t(y),t+s)$ and $(x,t)^{-1}=(\alpha_{-t}(-x),-t)$ in $G=N\rtimes_{\alpha}\RR$, hence 
	\begin{align*}
	(x,t)\cdot(y,s)\cdot(x,t)^{-1}
	&=(x\cdot\alpha_t(y),t+s)\cdot(\alpha_{-t}(-x),-t) \\
	&=(x\cdot\alpha_t(y)\cdot\alpha_{t+s}(\alpha_{-t}(-x)),s)\\
	&=(x\cdot\alpha_t(y)\cdot\alpha_s(-x),s). 
	\end{align*}
	Thus
	$$
(x,t)\cdot(y,s)\cdot(x,t)^{-1}=\begin{cases}
	(x\cdot\alpha_t(y)\cdot(-x),0)&\text{ if }s=0,\\
	(x\cdot\alpha_s(-x),s)&\text{ if }y=0.
	\end{cases}
	$$
	
	By differentiation at $(y,s)=(0,0)\in G$ we then obtain 
	$$(\Ad_G(x,t))(y,0)=\bigl(((\Ad_N x)\circ\ee^{tD})(y),0\bigr)$$
	and 
	\begin{align*}
	(\Ad_G(x,t))(0,1)
	&=\Bigl(\frac{\de}{\de s}\Bigl\vert_{s=0}(x\cdot\ee^{sD}(-x)),1\Bigr)
	\end{align*}
	On the other hand, by \cite[Ch. II, \S 6, no. 5, Prop. 5]{Bo06}
	$$(u+v)\cdot (-u)=\sum_{k\ge 0}\frac{1}{(k+1)!}(\ad_\ng u)^kv+ O(v^2),$$
	hence, replacing $v$ by $-v$ and then multiplying both sides by $-1$, 
	$$u\cdot(-u+v)=\sum_{k\ge 0}\frac{1}{(k+1)!}(\ad_\ng u)^kv+ O(v^2).$$
	For $u=x$ and $v=x-\ee^{sD}x=O(s)$ we then obtain  
	$$x\cdot\ee^{sD}(-x)=\sum_{k\ge 0}\frac{1}{(k+1)!}(\ad_\ng x)^k(x-\ee^{sD}x)+ O(s^2)$$
	hence 
	$$\frac{\de}{\de s}\Bigl\vert_{s=0}(x\cdot\ee^{sD}(-x))
	=-\sum_{k\ge 0}\frac{1}{(k+1)!}(\ad_\ng x)^kDx. 
	$$
	Consequently 
	$$\Ad_G(x,t)\colon\ng\dotplus\RR\to\ng\dotplus\RR,\quad 
	\Ad_G(x,t)=\begin{pmatrix}
	\Ad_N(x)\ee^{tD} & \psi(\ad_\ng x)Dx \\
	0 & 1
	\end{pmatrix}$$
	and, then 
	$$\Ad_G^*((x,t)^{-1})
	=\Ad_G(x,t)^*
	=\begin{pmatrix}
	\ee^{tD^*}\Ad_N(x)^* & 0 \\
	\psi(\ad_\ng x)Dx & 1
	\end{pmatrix}
	=\begin{pmatrix}
	\ee^{tD^*}\Ad_N^*(-x) & 0 \\
	\psi(\ad_\ng x)Dx & 1
	\end{pmatrix}$$
	and this completes the proof. 
\end{proof}

\begin{lemma}\label{loc-closed}
Let $\Vc$ be a finite-dimensional real vector space. 
For every $A\in\End(\Vc)$ with $\sigma(A)\cap\ie\RR=\emptyset$ 
and every $v\in\Vc\setminus\{0\}$ the mapping 
$\RR\to\Vc$, $t\mapsto \ee^{tA}v$, 
is a homeomorphism onto its image, and its image is a locally closed subset of~$\Vc$.  
\end{lemma}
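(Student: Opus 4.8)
The plan is to exploit the hyperbolicity of $A$ encoded in the hypothesis $\sigma(A)\cap\ie\RR=\emptyset$. First I would decompose $\Vc=\Vc_+\oplus\Vc_-$ into the sum of the generalized eigenspaces of $A$ attached to the eigenvalues of positive, respectively negative, real part; since $A$ has no eigenvalue on the imaginary axis, these two $A$-invariant (hence $\ee^{tA}$-invariant) subspaces together span $\Vc$. Fixing a norm $\Vert\cdot\Vert$ on $\Vc$ and reading off the Jordan form of $A$, one obtains the only analytic input needed: for $0\neq w\in\Vc_+$ one has $\Vert\ee^{tA}w\Vert\to\infty$ as $t\to+\infty$ and $\Vert\ee^{tA}w\Vert\to 0$ as $t\to-\infty$, while the two limits are interchanged for $0\neq w\in\Vc_-$. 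This is the main, though routine, technical point; everything else is a topological consequence.

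Write $\phi\colon\RR\to\Vc$, $\phi(t):=\ee^{tA}v$, and $M:=\phi(\RR)$, and split $v=v_++v_-$ along the above decomposition. The asymptotics show that $M$ is unbounded: if $v_+\neq 0$ then $\Vert\phi(t)\Vert\to\infty$ as $t\to+\infty$ (the $\Vc_-$-component staying bounded), while if $v_+=0$ then $v=v_-\neq 0$ and $\Vert\phi(t)\Vert\to\infty$ as $t\to-\infty$. Injectivity then follows at once: an equality $\phi(t_1)=\phi(t_2)$ with $t_1\neq t_2$ would give $\ee^{sA}v=v$ with $s:=t_1-t_2\neq 0$, so $\phi$ would be $s$-periodic and $M=\phi([0,|s|])$ would be compact, contradicting unboundedness. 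Hence $\phi$ is a continuous bijection onto $M$.

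Next I would analyse the closure $\overline{M}$ and prove the sharp statement $\overline{M}\subseteq M\cup\{0\}$. For a convergent sequence $\phi(t_n)\to y$: if $(t_n)$ is bounded, a subsequence converges to some finite $t^*$ and $y=\phi(t^*)\in M$; if $(t_n)$ admits a subsequence tending to $+\infty$ or to $-\infty$, the asymptotics force that subsequence to have norm tending to $\infty$ (impossible for a convergent sequence) or to tend to $0$, so $y=0$. Since $\ee^{tA}$ is invertible and $v\neq 0$ we have $0\notin M$, so either $0\notin\overline{M}$ and $M=\overline{M}$ is closed, or $0\in\overline{M}$ and $M=\overline{M}\setminus\{0\}$ is open in $\overline{M}$; in both cases $M$ is locally closed. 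The same trichotomy yields continuity of $\phi^{-1}$: if $\phi(s_n)\to\phi(s)$ but $s_n\not\to s$, I would pass to a subsequence that is either bounded with limit $s^*\neq s$ --- giving $\phi(s^*)=\phi(s)$ and hence $s^*=s$ by injectivity, a contradiction --- or unbounded, giving $\phi(s_{n_k})\to 0$ or $\Vert\phi(s_{n_k})\Vert\to\infty$, contradicting convergence to $\phi(s)\neq 0$. Thus $\phi$ is a homeomorphism onto $M$, completing the argument.

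The only genuine obstacle is the hyperbolic splitting together with the norm asymptotics; once these are recorded, injectivity, the homeomorphism property, and local closedness all drop out of a single case distinction according to whether the time parameter stays bounded or escapes to $\pm\infty$.
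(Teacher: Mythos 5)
Your argument is correct and complete. Note, however, that the paper does not actually prove this lemma internally: its ``proof'' is a one-line citation to \cite[Lemma 5.3]{BB21a}, so there is no in-paper argument to compare against, and your proposal supplies exactly the kind of self-contained proof the paper delegates to a reference. Your route --- the hyperbolic splitting $\Vc=\Vc_+\oplus\Vc_-$ into generalized eigenspaces for eigenvalues of positive/negative real part, the norm asymptotics $\Vert\ee^{tA}w\Vert\to\infty$ or $0$ as $t\to\pm\infty$ on each factor, and then a single bounded-versus-escaping-time case distinction --- correctly yields all three conclusions: injectivity (a period would make the image compact, contradicting unboundedness), the key inclusion $\overline{M}\subseteq M\cup\{0\}$ (which gives local closedness since $0\notin M$, as $M$ is then either closed or equal to $\overline{M}\cap(\Vc\setminus\{0\})$), and continuity of the inverse. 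The only point worth writing out explicitly is the step ``$v_+\neq 0$ implies $\Vert\phi(t)\Vert\to\infty$ as $t\to+\infty$'': this needs the boundedness of the projection $P_+$ onto $\Vc_+$ along $\Vc_-$, via $\Vert\ee^{tA}v_+\Vert=\Vert P_+\phi(t)\Vert\le\Vert P_+\Vert\,\Vert\phi(t)\Vert$, but your parenthetical remark about the $\Vc_-$-component staying bounded shows you have the right picture, and the asymptotics themselves are standard from the Jordan form, as you say.
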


\begin{proof}
See \cite[Lemma 5.3]{BB21a} and \cite[Proof of Prop. 2.14]{BB18b}.
\end{proof}

We now make the following remark for later use in the proof of Lemma~\ref{codim1_lemma2}. 

\begin{remark}
\normalfont
	For every connected solvable Lie group $G$ with its Lie algebra $\gg$ and
every ideal $\hg\subseteq\gg$ we have 
\begin{equation}\label{quasi_eq2}
r(\xi)\subseteq\xi+\hg^\perp \quad \text{for every $\xi\in \gg^*$ such that  $\hg\subseteq\gg(\xi)$}.
\end{equation}
Indeed, since  $[\gg,\hg]\subseteq\hg$ and the Lie group $G$ is connected, 
we obtain $G\hg\subseteq\hg$, hence $G\hg^\perp\subseteq\hg^\perp$. 
Then fix any symmetric open neighbourhoods $V_0$ of $0\in\gg$ and $U_1$ of $\1\in G$  with the property that 
$\exp_G\vert_{V_0}\colon V_0\to U_1$ is a diffeomorphism and 
for all $x\in V_0$ 
the series $\sum\limits_{k\ge 0}\frac{1}{k!}(\ad_\gg x)^k$ is convergent in $\Aut(\gg)$. 
Then for all $g=\exp_G x\in U_1$ with $x\in V_0$ and all 
$y\in\hg$ we have 
$$\langle g^{-1}\xi,y\rangle=\langle \xi,gy\rangle 
=\langle \xi,\sum\limits_{k\ge 0}\frac{1}{k!}(\ad_\gg x)^ky\rangle 
\in\langle\xi,y\rangle +\langle \xi, [x,\hg]\rangle
=\{\langle\xi,y\rangle\} $$
since $\hg\subseteq\gg(\xi)$. 
Thus, for every $g\in U_1$,  $g\xi=\xi$ on  $\hg^\perp$. 
Moreover, since $G$ is connected, for arbitrary $g\in G$ there exist $g_1,\dots,g_m\in U_1$ with $g=g_1\cdots g_m$, 
and we have 
\begin{align*}
g\xi-\xi
& =g_1\cdots g_{m-1}(g_m\xi-\xi)
+\cdots +g_1(g_2\xi-\xi)+(g_1\xi-\xi) \\
&\in g_1\cdots g_{m-1}\hg^\perp+\cdots+g_1\hg^\perp+\hg^\perp \\
&\subseteq\hg^\perp,
\end{align*}
since we have noted above that $G\hg^\perp\subseteq\hg^\perp$. 
Consequently, if $\xi\in\gg^*$ and $\hg\subseteq\gg(\xi)$, then $G\xi\subseteq\xi+\hg^\perp$, which, by \eqref{quasi_eq1}, implies \eqref{quasi_eq2}. 
\end{remark}

\begin{lemma}\label{codim1_lemma2}
	Assume the setting of Lemma~\ref{codim1_lemma1} and let $\zg$ be the centre of~$\ng$. 
	If $\Oc\in(\gg^*/G)^\sim$ such that $\Oc\subseteq\gg^*$ is open, then the following assertions hold true: 
	\begin{enumerate}[{\rm(i)}]
		\item\label{codim1_lemma2_item1} We have $\sigma(D\vert_\zg)\cap\ie\RR=\emptyset$. 
		\item\label{codim1_lemma2_item2}
		We have $\Oc\cap\zg^\perp=\emptyset$. 
		\item\label{codim1_lemma2_item3}
		If $\xi\in\Oc$ 
		then 
		$G(\xi)=\{(x,0)\in G=N\rtimes_{\alpha}\RR\mid \xi\circ\ad_\ng x=0\text{ and }\langle\xi,Dx\rangle=0\}$ 
		and 
		$G(\xi\vert_\ng)=N(\xi\vert_\ng)\times\{0\}$.
		\item\label{codim1_lemma2_item4} 
		For every $\xi\in\Oc$ the subset $G \xi\vert_\ng =\{g\xi\vert_\ng\mid g\in G\}\subseteq \ng^*$ is open.  
	\end{enumerate}  
\end{lemma}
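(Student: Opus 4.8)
The plan is to work throughout with the two $G$-equivariant restriction maps $p\colon\gg^*\to\ng^*$, $\xi\mapsto\xi\vert_\ng$, and $q\colon\ng^*\to\zg^*$, $\phi\mapsto\phi\vert_\zg$, together with their composite $\rho:=q\circ p\colon\gg^*\to\zg^*$; all three are open surjective linear maps, using that $\ng$ and $\zg$ are ideals of $\gg$ (the latter because $D\zg\subseteq\zg$). Writing $\xi=(\phi,c)\in\ng^*\dotplus\RR$, Lemma~\ref{codim1_lemma1} gives the $G$-action on $\ng^*$ as $(x,t)\phi=\ee^{tD^*}\Ad_N^*(-x)\phi$; a further restriction to the central ideal $\zg$ annihilates the $N$-part, since $\Ad_N$ fixes $\zg$ pointwise, so with $A:=D\vert_\zg\in\End(\zg)$ both $q$ and $\rho$ intertwine the $G$-actions with the one-parameter flow $(x,t)\cdot\psi=\ee^{tA^*}\psi$ on $\zg^*$. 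Because $\Oc$ is open and these maps are open, $p(\Oc)$ and $\rho(\Oc)$ are open; and because $\Oc$ is a quasi-orbit, $\Oc\subseteq\overline{G\eta}$ for every $\eta\in\Oc$, whence every flow-orbit is dense in $\rho(\Oc)$ and every orbit $\{g(\eta\vert_\ng)\mid g\in G\}$ is dense in $p(\Oc)$. I may assume $\ng\neq\{0\}$, so that $\zg\neq\{0\}$; otherwise $G=\RR$ admits no open quasi-orbit and there is nothing to prove.

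For \eqref{codim1_lemma2_item2}, if some $\eta\in\Oc$ had $\eta\vert_\zg=0$, its flow-orbit would be $\{0\}$, forcing the nonempty open set $\rho(\Oc)$ into $\overline{\{0\}}=\{0\}$, which is absurd since $\zg\neq\{0\}$; hence $\Oc\cap\zg^\perp=\emptyset$. For \eqref{codim1_lemma2_item1}, fix $\xi\in\Oc$ and set $\psi_0:=\rho(\xi)$; since $\Oc\subseteq\overline{G\xi}$ and $\rho$ is continuous, the open set $\rho(\Oc)$ is contained in the closure of the single flow-orbit $\{\ee^{tA^*}\psi_0\mid t\in\RR\}$. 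The closure of a single orbit of a linear flow on a space of dimension $\ge2$ has empty interior: on each generalized eigenspace with purely imaginary eigenvalue the semisimple part preserves a Euclidean norm and confines the orbit to a lower-dimensional invariant set, while on the hyperbolic part the orbit is a power-law curve. Hence $\dim\zg\le1$, so $\dim\zg=1$, and then $A\neq0$ (otherwise the flow-orbit is a point, not dense in the open set $\rho(\Oc)$), whence $\sigma(D\vert_\zg)=\sigma(A)\subseteq\RR\setminus\{0\}$ and in particular $\sigma(D\vert_\zg)\cap\ie\RR=\emptyset$.

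Knowing $A$ is hyperbolic lets me pin down the stabilizers in \eqref{codim1_lemma2_item3}. Fix $\xi\in\Oc$, put $\phi=\xi\vert_\ng$ and $\psi_0=\xi\vert_\zg\neq0$ (by \eqref{codim1_lemma2_item2}). If $(x,t)$ fixes $\xi$, or merely fixes $\phi$, then applying the equivariant map $q$ gives $\ee^{tA^*}\psi_0=\psi_0$; as $\sigma(A)\cap\ie\RR=\emptyset$, the operator $\ee^{tA^*}$ has no eigenvalue of modulus $1$ for $t\neq0$, so $\ee^{tA^*}-\id$ is invertible and $\psi_0\neq0$ forces $t=0$. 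With $t=0$ the condition $\Ad_N^*(-x)\phi=\phi$ says $\exp_N(-x)\in N(\phi)$; since $N$ is nilpotent and simply connected, $N(\phi)=\exp_N(\ng(\phi))$ with $\exp_N$ bijective, so this is equivalent to $x\in\ng(\phi)$, i.e. $\xi\circ\ad_\ng x=0$, which already yields $G(\xi\vert_\ng)=N(\xi\vert_\ng)\times\{0\}$. Finally, once $x\in\ng(\phi)$ the residual scalar equation $\langle\phi,\psi(\ad_\ng x)Dx\rangle=0$ simplifies: for $k\ge1$ one has $(\ad_\ng x)^kDx\in[x,\ng]$, on which $\phi$ vanishes, so only the $k=0$ term survives and the equation reads $\langle\xi,Dx\rangle=0$. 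This gives the stated description of $G(\xi)$.

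The main work is \eqref{codim1_lemma2_item4}, which I expect to be the principal obstacle. The target set is the $G$-orbit $G\phi=\bigcup_{t\in\RR}\ee^{tD^*}(N\phi)$, and I have already noted it is dense in the open set $p(\Oc)$; it therefore suffices to prove $G\phi$ is locally closed, since a locally closed subset that is dense in an open set and contained in it is open in the ambient space (being open in its closure). To prove local closedness I fiber $G\phi$ over $\zg^*$ via $q$: its image $q(G\phi)$ is the flow-orbit $\{\ee^{tA^*}\psi_0\mid t\}$ (with $\psi_0=q(\phi)=\rho(\xi)$), which by Lemma~\ref{loc-closed}, applicable thanks to \eqref{codim1_lemma2_item1} and $\psi_0\neq0$, is homeomorphic to $\RR$ and locally closed in $\zg^*$; injectivity of $t\mapsto\ee^{tA^*}\psi_0$ makes the fiber over $\ee^{tA^*}\psi_0$ exactly $\ee^{tD^*}(N\phi)$. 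Now if $\eta\in\overline{G\phi}$ with $q(\eta)$ lying in the flow-orbit, then $q(\eta)=\ee^{tA^*}\psi_0$ for a unique $t$, and local closedness of the orbit upgrades the ambient convergence of the approximating points to convergence of their parameters to $t$; passing to the limit and using that the nilpotent coadjoint orbit $N\phi$ is closed in $\ng^*$ gives $\ee^{-tD^*}\eta\in N\phi$, i.e. $\eta\in G\phi$. Hence $G\phi=\overline{G\phi}\cap q^{-1}(\{\ee^{tA^*}\psi_0\mid t\})$ is the intersection of a closed set with a locally closed set, so it is locally closed, completing \eqref{codim1_lemma2_item4}. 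The delicate points requiring care are the purely imaginary (and non-semisimple) case of the dynamical claim in \eqref{codim1_lemma2_item1} and the convergence-of-parameters step in \eqref{codim1_lemma2_item4}, where local closedness of the base flow-orbit is what converts convergence in $\zg^*$ into convergence of the time parameter.
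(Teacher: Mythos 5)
Your proof is correct overall, and on items (ii) and (iii) it essentially coincides with the paper's argument; the genuine differences are in (i) and in how local closedness is obtained in (iv). For (i), the paper never proves $\dim\zg=1$: it extracts a $D$-invariant subspace $\zg_0\subseteq\zg$ on which $\ee^{tD}$ acts isometrically (an eigenline for the eigenvalue $0$, or a $2$-plane on which $D$ restricts to a rotation), shows that $\xi\vert_{\zg_0}\ne 0$ for $\xi\in\Oc$, and derives a contradiction from the fact that $\Vert\eta\vert_{\zg_0}\Vert$ is constant on $\overline{G\xi}\supseteq\Oc$, while the restriction map is open and spheres in $\zg_0^*$ have empty interior. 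You instead invoke the general statement that a single orbit closure of a linear flow has empty interior whenever the ambient dimension is at least $2$; this buys you a stronger conclusion ($\dim\zg=1$ and $D\vert_\zg$ a nonzero scalar), but your justification is only a sketch, and it is incomplete exactly at the point you flag: if $D\vert_\zg$ has a purely imaginary eigenvalue with nontrivial nilpotent part, the orbit is unbounded, so the invariant Euclidean norm of the semisimple part does not by itself confine it to a lower-dimensional set. One can repair this, e.g., by passing to the quotient of the generalized eigenspace by the range of the nilpotent part (there the induced flow is precompact, so its orbits lie on spheres) and pulling the resulting closed measure-zero set back, or by a properness argument; but this is an extra idea that the paper's isometric-subspace trick avoids entirely.

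For (iv), your mechanism is genuinely different and more self-contained than the paper's. You realize $G\xi\vert_\ng$ as $\overline{G\xi\vert_\ng}\cap q^{-1}(F)$, where $F\subseteq\zg^*$ is the flow orbit, locally closed and homeomorphically parametrized by Lemma~\ref{loc-closed}, and you use closedness of the nilpotent coadjoint orbit $N\xi\vert_\ng$ to pass to the limit in the fibers; local closedness of $G\xi\vert_\ng$ then comes from elementary stability properties of locally closed sets. The paper uses the same two ingredients (Lemma~\ref{loc-closed} to force convergence of the time parameters, closedness of nilpotent coadjoint orbits for the $N$-part, plus the splitting from (iii)) but packages them as a proof that the orbit map $G/G(\xi\vert_\ng)\to\ng^*$ is a homeomorphism onto its image; to conclude local closedness from that, one still needs the point-set fact that a subspace homeomorphic to a locally compact space is locally closed in a Hausdorff space, which the paper leaves implicit and your argument does not need. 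Your concluding reduction (a locally closed set that is dense in, and contained in, the open set $p(\Oc)$ equals $U\cap p(\Oc)$ for some open $U$, hence is open) is the same idea as the paper's final step, just tidier than the paper's auxiliary $G$-invariant open set $W$.
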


\begin{proof}
\eqref{codim1_lemma2_item1} 
We argue by contradiction. 
If $\sigma(D\vert_\zg)\cap\ie\RR\ne\emptyset$, then there exists a linear subspace $\{0\}\ne\zg_0\subseteq\zg$ with $D\zg_0\subseteq\zg_0$ such that either $D\vert_{\zg_0}=0$ (if $0\in \sigma(D\vert_\zg)\cap\ie\RR$), 
or $\dim\zg_0=2$ and $D\vert_{\zg_0}=\begin{pmatrix}
0 & -t_0 \\
t_0 &\hfill 0
\end{pmatrix}$ 
with respect to a suitable basis of $\zg_0$, 
where $t_0\in\RR\setminus\{0\}$ (if $\pm \ie t_0\in \sigma(D\vert_\zg)\cap\ie\RR\setminus\{0\}$). 
In any case, there exists a norm $\Vert\cdot\Vert$ on~$\zg_0$ 
such that the operator $\ee^{tD}\vert_{\zg_0}\colon\zg_0\to\zg_0$ is an isometry for every $t\in\RR$. 
We denote again by $\Vert\cdot\Vert$ the dual norm on~$\zg_0^*$. 

Now select any $\xi\in\Oc$, hence $G\xi\subseteq\Oc=r(\xi)\subseteq\overline{G\xi}$. 
We note that, since $D\zg_0\subseteq\zg_0$ and $[\ng,\zg_0]=\{0\}$, 
we have $[\gg,\zg_0]\subseteq\zg_0$. 
Therefore, if $\xi\in\zg_0^\perp$ then 
we may apply  \eqref{quasi_eq2} for $\hg=\zg_0$ to obtain 
$\Oc\subseteq \zg_0^\perp$, 
which is a contradiction with the fact that $\Oc\subseteq\gg^*$ is open while $\zg_0^\perp\subsetneqq\gg^*$ since $\dim\zg_0\ge 1$. 
Consequently $\xi\not\in\zg_0^\perp$, that is, $\xi\vert_{\zg_0}\ne0$. 

For arbitrary $(x,t)\in G$ we obtain by Lemma~\ref{codim1_lemma1} 
\begin{equation}\label{codim1_lemma2_proof_eq1}
\Ad_G^*((x,t)^{-1})\xi\vert_{\zg_0}
=\ee^{tD^*}\Ad_N^*(-x)\xi\vert_{\zg_0}
=\xi\circ \Ad_N(x)\circ \ee^{tD}\vert_{\zg_0}
=\xi\circ \ee^{tD}\vert_{\zg_0}
\end{equation}
where the last equality follows by the fact that $D(\zg_0)\subseteq\zg_0$ and $\Ad_N(x)\vert_\zg=\id_\zg$ since $x\in \ng$. 
Therefore, by the way $\zg_0$ was chosen, we obtain 
$\Vert \eta\vert_{\zg_0}\Vert=\Vert \xi\vert_{\zg_0}\Vert$ for every $\eta\in G\xi$, and this equality extends by continuity to every $\eta\in\overline{G\xi}$. 
In particular $\Vert \eta\vert_{\zg_0}\Vert=\Vert \xi\vert_{\zg_0}\Vert$ 
for every $\xi\in\Oc$. 
But this is a contradiction with the fact that $\Oc\subseteq\gg^*$ is an open subset, while the restriction mapping $\gg^*\to\zg_0^*$, $\eta\mapsto\eta\vert_{\zg_0}$, is an open mapping and every sphere in $\zg_0^*$ with respect to any norm has empty interior. 
This completes the proof of the fact that $\sigma(D\vert_\zg)\cap\ie\RR\ne\emptyset$.

\eqref{codim1_lemma2_item2} 
Since $[\gg,\zg]\subseteq\zg$ we obtain $G\zg^\perp\subseteq\zg^\perp$ 
hence, if $\Oc\cap\zg^\perp\ne\emptyset$ then $\Oc\subseteq\zg^\perp$, 
which is a contradiction since $\Oc$ is open and $\dim\zg\ge 1$ (just as in the proof of Lemma~\ref{codim1_lemma1}). 
Thus $\Oc\cap\zg^\perp=\emptyset$. 

\eqref{codim1_lemma2_item3}
Let $(x,t)\in G(\xi)$ be arbitrary. 
Then $(x,t)^{-1}\in G(\xi)$,  hence $\Ad_G^*((x,t)^{-1})\xi=\xi$. 
Restricting both sides of this equality to $\zg$ we obtain, just as in~\eqref{codim1_lemma2_proof_eq1} above, $\xi\vert_\zg=\xi\circ\ee^{tD}\vert_\zg$. 
Since $\sigma(D\vert_\zg)\cap\ie\RR\ne\emptyset$, it then follows that $t=0$. 
Now, writing $\xi=\begin{pmatrix}
\xi\vert_\ng \\
r_0
\end{pmatrix}\in\ng^*\dotplus\RR=\gg^*$, we obtain by Lemma~\ref{codim1_lemma1}, 
$$
\Ad_G^*((x,0)^{-1})\xi
=\begin{pmatrix}
\Ad_N^*(-x) & 0 \\
\psi(\ad_\ng x)Dx & 1
\end{pmatrix}
\begin{pmatrix}
\xi\vert_\ng \\
r_0
\end{pmatrix} \\
=\begin{pmatrix}
\Ad_N^*(-x)\xi\vert_\ng \\
\langle\xi,\psi(\ad_\ng x)Dx\rangle+r_0
\end{pmatrix}
$$
hence the equation $\Ad_G^*((x,0)^{-1})\xi=\xi$ is equivalent to $\xi\vert_\ng=\Ad_N^*(-x)\xi\vert_\ng $ and $\langle\xi,\psi(\ad_\ng x)Dx\rangle=0$. 
The Lie group  $N$ is nilpotent, therefore the equality 
$\xi\vert_\ng=\Ad_N^*(-x)\xi\vert_\ng $ is equivalent to 
$\xi\circ\ad_\ng x=0\in\ng^*$.
Then $\xi\circ\psi(\ad_\ng x)=\xi$, 
hence $(\xi\circ\psi(\ad_\ng x)\circ D)(x)=\langle\xi,Dx\rangle$. 
Thus the condition $(x,0)\in G(\xi)$ is equivalent to the pair of equations 
$\langle\xi,Dx\rangle=0$ and $\xi\circ\ad_\ng x=0$. 

Now let  $(x,t)\in G(\xi\vert_\ng)$ be arbitrary. 
Then  $(x,t)^{-1}\in G(\xi\vert_\ng)$, that is,  $$\Ad_G^*((x,t)^{-1})\xi\vert_\ng=\xi\vert_\ng.$$ 
This last equality is equivalent to 
$t=0$ and $\xi\vert_\ng=\Ad_N^*(-x)\xi\vert_\ng $ just as above. 

\eqref{codim1_lemma2_item4}
The restriction map $\rho\colon \gg\to\ng^*$, $\rho(\xi):=\xi\vert_\ng$, is $G$-equivariant since $\ng$ is an ideal of $\gg$. 
For arbitrary $\xi\in\Oc$, using the inclusions $G\xi\subseteq\Oc\subseteq\overline{G\xi}$ we then obtain 
\begin{equation}
\label{codim1_lemma2_proof_eq-3}
G\rho(\xi)=\rho(G\xi)\subseteq\rho(\Oc)\subseteq\rho(\overline{G\xi})
\subseteq\overline{\rho(G\xi)}=\overline{G\rho(\xi)}
\end{equation}
where the last inclusion follows from the fact that the mapping $\rho$ is continuous. 
Since the subset $\Oc\subseteq\gg^*$ is open and $\rho$ is an open mapping, 
it also follows from the above inclusions that $\rho(\xi)$ belongs to the interior of $\overline{G\rho(\xi)}$ for arbitrary $\xi\in\Oc$. 

We now claim that the subset $G\rho(\xi)=G\xi\vert_\ng\subseteq\ng^*$ is locally closed for 
every $\xi\in\Oc$. 
To this end we must show that the continuous bijective mapping 
\begin{equation}\label{codim1_lemma2_proof_eq-2}
G/G(\xi\vert_\ng)\to \ng^*,\quad gG(\xi\vert_\ng)\mapsto g\xi\vert_\ng
\end{equation}
is a homeomorphism,  
that is, if $g=(x,t)\in G$ and $g_k=(x_k,t_k)\in G$ for $k\ge 1$ is a sequence 
with $\lim\limits_{k\to\infty}g_k^{-1}\xi\vert_\ng=g^{-1}\xi\vert_\ng$ in $\ng^*$,  
then  $\lim\limits_{k\to\infty}g_k^{-1}G(\xi\vert_\ng)
=g^{-1}G(\xi\vert_\ng)$ in $G/G(\xi\vert_\ng)$. 
By Lemma~\ref{codim1_lemma1} we have 
\begin{equation}\label{codim1_lemma2_proof_eq-1}
\xi\circ\Ad_N(-x)\circ\ee^{tD}
=\lim\limits_{k\to\infty}\xi\circ\Ad_N(-x_k)\circ\ee^{t_kD}.
\end{equation}
Restricting the above equality to $\zg$ and using that $D(\zg)\subseteq\zg$ while $\Ad_N(-x)\vert_\zg=\Ad_N(-x_k)\vert_\zg=\id_\zg$ for all $k\ge 1$, we then obtain 
\begin{equation*}
\xi\circ\ee^{tD}\vert_\zg=\lim\limits_{k\to\infty}\xi\circ\ee^{t_kD}\vert_\zg.
\end{equation*}
Here $\xi\in\Oc\subseteq\gg^*\setminus\zg^\perp$ by \eqref{codim1_lemma2_item2},  while 
$\sigma(D\vert_\zg)\cap\ie\RR=\emptyset$ by Lemma~\ref{codim1_lemma2}, 
hence also $\sigma((D\vert_\zg)^*)\cap\ie\RR=\emptyset$. 
It follows by Lemma~\ref{loc-closed} that   $\lim\limits_{k\to\infty}t_k=t$. 
This further implies by \eqref{codim1_lemma2_proof_eq-1} that 
$\Ad_N^*(x)(\xi\vert_\ng)=\lim\limits_{k\to\infty}\Ad_N^*(x_k)(\xi\vert_\ng)$ in $\ng^*$. 
Since the coadjoint orbits of the nilpotent Lie group $N$ are closed in $\ng^*$, it follows that the mapping $N/N(\xi\vert_\ng)\to \ng^*$, 
$y\mapsto \Ad_N^*(y)(\xi\vert_\ng)$ is a homeomorphism onto its image, 
hence $xN(\xi\vert_\ng)=\lim\limits_{k\to\infty} x_k N(\xi\vert_\ng)$ 
in $N/N(\xi\vert_\ng)$. 

On the other hand, by \eqref{codim1_lemma2_item3}, the map  
\begin{equation*}
G/G(\xi\vert_\ng)\to  N(\xi\vert_\ng)\times\RR, \quad 
(y,s)G(\xi\vert_\ng)\mapsto (yN(\xi\vert_\ng),s)
\end{equation*}
is a homeomorphism, 
which shows that $(x,t)G(\xi\vert_\ng)
=\lim\limits_{k\to\infty}(x_k,t_k)G(\xi\vert_\ng)=\infty$ 
in the locally compact space $G/G(\xi\vert_\ng)$, 
and this completes the proof of the fact that the mapping \eqref{codim1_lemma2_proof_eq-2} is a homeomorphism onto its image.

We have just proved our claim that the $G$-orbit $G\rho(\xi)\subseteq\ng^*$ is locally closed for every 
$\xi\in\Oc$.   
Then, there exists an open subset $V\subseteq\ng^*$ with 
$G\rho(\xi)=V\cap\overline{G\rho(\xi)}$. 
However, by \eqref{codim1_lemma2_proof_eq-3}, 
the open subset $\rho(\Oc)\subseteq\ng^*$ satisfies $\rho(\xi)\in\rho(\Oc)\subseteq\overline{G\rho(\xi)}$, 
hence also $g\rho(\xi)\in g\rho(\Oc)\subseteq\overline{G\rho(\xi)}$ 
for all $g\in G$. 
Defining $W:=\bigcup\limits_{g\in G}g\rho(\Oc)$, 
it follows that $W\subseteq\ng^*$ is an open subset with $GW\subseteq W$ 
and $G\rho(\xi)\subseteq W\subseteq \overline{G\rho(\xi)}$. 
This implies $V\cap G\rho(\xi)\subseteq V\cap W\subseteq V\cap \overline{G\rho(\xi)}$ hence $G\rho(\xi)\subseteq V\cap W\subseteq  G\rho(\xi)$, and then $G\rho(\xi)\subseteq V\cap W$. 
It follows that the subset $G\rho(\xi)\subseteq\ng^*$ is open, 
which completes the proof. 
\end{proof}

\begin{lemma}\label{codim1_lemma3}
Let $G$ be a 1-connected solvable Lie group with its nilradical $N$ and let $Z$ be the centre of $N$. 
If $\dim(G/N)=1$, then the following conditions are equivalent: 
\begin{enumerate}[{\rm(i)}]
	\item\label{codim1_lemma3_item1} The Lie group $G$ has open coadjoint quasi-orbits. 
	\item\label{codim1_lemma3_item2} The Lie group $G$ has open coadjoint orbits. 
	\item\label{codim1_lemma3_item3} The centre of $G$ is trivial, the generic coadjoint orbits of $N$ are flat, and $\dim Z=1$.
\end{enumerate}
If these conditions are satisfied, then every open coadjoint quasi-orbit is a coadjoint orbit.
\end{lemma}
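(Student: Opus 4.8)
The plan is to put $G$ into the semidirect‑product form of Lemma~\ref{codim1_lemma1} and then prove the cycle $(\mathrm{ii})\Rightarrow(\mathrm{i})\Rightarrow(\mathrm{iii})\Rightarrow(\mathrm{ii})$, disposing of the final assertion along the way. Since $N$ is the nilradical and $\dim(G/N)=1$, the quotient $G/N\cong\RR$ is contractible, so the extension splits and $G=N\rtimes_\alpha\RR$ with $\alpha_t=\ee^{tD}$ for $D:=\ad_\gg(e)\vert_\ng$, where $e$ generates a complementary one‑parameter subgroup; thus Lemmas~\ref{codim1_lemma1} and~\ref{codim1_lemma2} apply. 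Let $\zg$ be the centre of $\ng$, so $Z=\exp_N\zg$, and recall the Moore--Wolf characterisation that the generic coadjoint orbits of $N$ are flat if and only if $\ng(\mu)=\zg$ for $\mu$ in a dense open subset of $\ng^*$. As $\zg$ is $D$‑invariant, when $\dim\zg=1$ we may write $\zg=\RR w$ and $Dw=cw$, and I will use the elementary remark that $c=0$ would make $w$ central in $\gg$, so triviality of the centre of $G$ forces $c\ne0$. For $(\mathrm{ii})\Rightarrow(\mathrm{i})$ I would observe that an open orbit $G\xi$ is dense in $\overline{G\xi}$, so any $\eta$ with $\overline{G\eta}=\overline{G\xi}$ has $G\eta$ dense in $\overline{G\xi}$, hence $G\eta$ meets the open set $G\xi$ and therefore equals it; this shows $r(\xi)=G\xi$ is an open quasi‑orbit, and the same density argument will later identify open quasi‑orbits with orbits.

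For $(\mathrm{i})\Rightarrow(\mathrm{iii})$, let $\Oc=r(\xi)$ be open. Since $\zg_\gg\subseteq\gg(\eta)$ for every $\eta$, the inclusion~\eqref{quasi_eq2} gives $\Oc\subseteq\xi+\zg_\gg^\perp$, and openness forces $\zg_\gg=0$; as $G$ is simply connected its centre is then trivial. Writing $\mu:=\xi\vert_\ng$, Lemma~\ref{codim1_lemma2}\eqref{codim1_lemma2_item4} says that $\{g\mu\mid g\in G\}=\bigcup_{t\in\RR}\ee^{tD^*}(N\mu)$ is open in $\ng^*$; comparing dimensions gives $\dim N\mu\ge\dim\ng-1$, i.e.\ $\dim\ng(\mu)\le1$, and since $\zg\subseteq\ng(\mu)$ with $\dim\zg\ge1$ I conclude $\dim\zg=1$ and $\ng(\mu)=\zg$. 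Because $\zg$ is central, the stabiliser is constant along $N\mu$, so this closed connected orbit has tangent space $\zg^\perp$ everywhere and hence equals the affine space $\mu+\zg^\perp$; as the open image of $\Oc$ under restriction to $\ng^*$ contains generic functionals, the generic orbits of $N$ are flat. (The argument in fact yields $\ng(\mu)=\zg$ for every $\mu$ in that open image.)

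For $(\mathrm{iii})\Rightarrow(\mathrm{ii})$ I would choose $\mu$ in the dense open set where the orbit is flat, $\ng(\mu)=\zg=\RR w$ and $\langle\mu,w\rangle\ne0$, and set $\xi:=(\mu,r)$ for any $r$. Computing from Lemma~\ref{codim1_lemma1}, an element $x+se\in\gg(\xi)$ satisfies $\mu\circ(\ad_\ng x+sD)=0$ on $\ng$ together with $\langle\mu,Dx\rangle=0$; testing the first relation on $w$ gives $sc\langle\mu,w\rangle=0$, hence $s=0$ as $c\ne0$, and then $x\in\ng(\mu)=\RR w$, whereupon the second relation forces $x=0$. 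Thus $\gg(\xi)=\{0\}$, so $G\xi$ is open and $G$ has open coadjoint orbits. For the final assertion, if the conditions hold and $\Oc$ is an open quasi‑orbit, then every $\xi\in\Oc$ has $\langle\xi,w\rangle\ne0$ by Lemma~\ref{codim1_lemma2}\eqref{codim1_lemma2_item2} and $\ng(\xi\vert_\ng)=\zg$ by the step above, so the same computation gives $\gg(\xi)=\{0\}$, $G\xi$ is open, and the density argument of $(\mathrm{ii})\Rightarrow(\mathrm{i})$ yields $\Oc=G\xi\in\gg^*/G$.

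The main obstacle is the step in $(\mathrm{i})\Rightarrow(\mathrm{iii})$ that must extract \emph{both} $\dim Z=1$ and the flatness of the generic $N$‑orbits from the single openness statement of Lemma~\ref{codim1_lemma2}\eqref{codim1_lemma2_item4}; the delicate points there are the dimension comparison for the union $\bigcup_t\ee^{tD^*}(N\mu)$ and the correct use of the Moore--Wolf equivalence, so that flatness and the identity $\ng(\mu)=\zg$ can be traded freely in both directions.
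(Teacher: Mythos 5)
Most of your argument is correct and runs parallel to the paper's: the same splitting $G=N\rtimes_\alpha\RR$, the same use of Lemma~\ref{codim1_lemma2}\eqref{codim1_lemma2_item3}--\eqref{codim1_lemma2_item4} with a dimension count to force $\dim\ng(\xi\vert_\ng)=1$, hence $\ng(\xi\vert_\ng)=\zg$, $\dim Z=1$, and flatness of the generic $N$-orbits. Your two genuine departures are both fine and arguably more self-contained: you prove \eqref{codim1_lemma3_item3}$\Rightarrow$\eqref{codim1_lemma3_item2} directly by computing $\gg(\xi)=\{0\}$ at a generic functional not vanishing on $\zg$, where the paper cites \cite[Lemma 3.7]{BB21b} for \eqref{codim1_lemma3_item2}$\iff$\eqref{codim1_lemma3_item3}; and you settle the final assertion by a direct density argument (an orbit dense in the closure of an open orbit must meet it, hence equal it), where the paper invokes Lemma~\ref{minimal} together with the uniqueness property of the quasi-orbit partition.

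There is, however, one step that is false as written: in \eqref{codim1_lemma3_item1}$\Rightarrow$\eqref{codim1_lemma3_item3} you deduce triviality of the centre of $G$ from $\zg_\gg=\{0\}$ ``as $G$ is simply connected''. Simple connectedness of a solvable Lie group does not force its centre to be connected: the universal cover of the Euclidean motion group, $G=\CC\rtimes_\alpha\RR$ with $\alpha_t(v)=\ee^{\ie t}v$, is simply connected and solvable, satisfies the hypotheses of this very lemma ($N=\CC$, $\dim(G/N)=1$) and has $\zg_\gg=\{0\}$, yet its centre is $\{0\}\times 2\pi\ZZ\simeq\ZZ$. So you must separately exclude a nontrivial \emph{discrete} centre, and this can be done with Lemma~\ref{codim1_lemma2}\eqref{codim1_lemma2_item1}, which you never invoke: writing out centrality of $(x,t)$ in $N\rtimes_\alpha\RR$ (as in the proof of Lemma~\ref{codim1_lemma1}) gives $Dx=0$ and $\Ad_N(x)\circ\ee^{tD}=\id_\ng$; restricting the latter to $\zg$, where $\Ad_N(x)=\id_\zg$, yields $\ee^{tD}\vert_\zg=\id_\zg$. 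If $t\ne 0$ this forces $\sigma(D\vert_\zg)\subseteq\frac{2\pi\ie}{t}\ZZ\subseteq\ie\RR$, while if $t=0$ and $x\ne\0$ then $x\in\zg\cap\Ker D$ gives $0\in\sigma(D\vert_\zg)$; either way Lemma~\ref{codim1_lemma2}\eqref{codim1_lemma2_item1} is contradicted, so $Z(G)=\{\1\}$. (Your converse remark is unaffected: $\zg_\gg\ne\{0\}$ does imply $Z(G)\ne\{\1\}$, since $\exp_G(\zg_\gg)\subseteq Z(G)$.) With this repair your proof is complete.
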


\begin{proof}
We have \eqref{codim1_lemma3_item2}$\iff$\eqref{codim1_lemma3_item3} 
by \cite[Lemma 3.7]{BB23}, while the implication  
\eqref{codim1_lemma3_item2}$\implies$\eqref{codim1_lemma3_item1} 
is trivial. 

``\eqref{codim1_lemma3_item1}$\implies$\eqref{codim1_lemma3_item3}'' 
Let $\Oc$ be an arbitrary  coadjoint quasi-orbit which is an open subset of $\gg^*$, 
and let $\xi\in\Oc$.
Then the action $G\times\ng^*\to\ng^*$, $(g,\eta)\mapsto g\eta=\eta\circ\Ad_G(g^{-1})\vert_\ng$, is transitive 
and gives the diffeomorphism $G/G(\xi\vert_\ng)\to G \xi\vert_\ng\subset \ng^*$, $gG(\xi\vert_\ng)\mapsto g\xi\vert_\ng$. 
Since the set $G \xi\vert_\ng$ is open in $\ng^*$,  by Lemma~\ref{codim1_lemma2}\eqref{codim1_lemma2_item4},  it has the same dimension as $\ng^*$.
Therefore $\dim\ng^*=\dim G-\dim G(\xi\vert\ng)=1+\dim\ng-\dim G(\xi\vert\ng)$, hence $\dim G(\xi\vert\ng)=1$. 
By  Lemma~\ref{codim1_lemma2}\eqref{codim1_lemma2_item3} we then obtain 
$\dim N(\xi\vert_\ng)=1$, which directly implies that the nilpotent Lie group $N$ has 1-dimensional centre and generic coadjoint orbits. 

Finally, if \eqref{codim1_lemma3_item1}-- \eqref{codim1_lemma3_item3} hold, 
then every  open coadjoint quasi-orbit is a coadjoint orbit by Corollary~\ref{minimal_cor}. 
\end{proof}

\begin{proof}[Proof of Theorem~\ref{codim1_th}] 
	By Remark~\ref{isolated}, we must prove that every isolated point of $\Prim(G)$ is type~$\I$. 
	
	Let $\Pc\in\Prim(G)$ be an isolated point.
By Remark~\ref{isolated}, there exists a unique square-integrable class
$$[\pi]^\frown\in \stackrel{\frown}{G}_\nor
\text{ with }
\Pc=\ker([\pi]^\frown):=\Ker\pi\subseteq C^*(G).$$
By Theorem~\ref{open_th} we obtain a coadjoint quasi-orbit $\Oc\in(\gg^*/G)^\sim$ such that $\Oc\subseteq\gg^*$ is open and $[\pi]^\frown=\ell(\O)$, where $\O:=\Bun(\Oc)\in(\Bun(\Oc)/G)^\approx$.

Since $\dim(G/N)=1$ by hypothesis, we have $\Oc\in\gg^*/G$ by Lemma~\ref{codim1_lemma3}, and then $\ell(\O)$ is type~$\I$ by Theorem~\ref{F_gen}.
Therefore the ideal $\Pc$ is type~$\I$.
\end{proof}

\section{Examples}
\label{sect7}

In this section we give examples of groups to which our results apply. 
In particular, we construct a family of solvable Lie groups $G$ which have open coadjoint quasi-orbits that are not orbits, and have square-integrable representations that are not type~$\I$. 
These examples are not unimodular groups, as established in general in Corollary~\ref{clopen_cor}. 
They show in particular that the codimension of the nilradical~$N$ can be any integer at least~$3$. 
We point out that necessarily $\dim(G/N)\ge2$ by Theorem~\ref{codim1_th}, but we do not know if such examples exist with $\dim(G/N)=2$.

\subsection*{Semidirect products of abelian Lie groups}
\normalfont
Let $\Vc$ be a finite-dimensional real vector space and let $\ag$ be a Lie algebra with its corresponding 1-connected Lie group~$A$. 
If $\alpha\colon A\to \GL(\Vc)$ is a continuous group homomorphism, 
we form the semidirect product 
$G:=\Vc\rtimes_\alpha A$ with the group operation $(v,a)\cdot (w,b)=(v+\alpha(a)w, ab)$ for all $v,w\in\Vc$ and $a,b\in A$. 
For every $p\in \Vc^*$ we define $\theta_p\colon \Vc\to\ag^*$, $\theta_p(v):=-\langle p,\de\pi(\cdot)v\rangle$, where $\langle\cdot,\cdot\rangle\colon\Vc^*\times\Vc\to\RR$ is the duality pairing. 
Then $\gg^*=\Vc^*\times\ag^*$, and the coadjoint action $\Ad_G^*\colon G\times(\Vc^*\times\ag^*)\to \Vc^*\times\ag^*$ is given by the formula 
\begin{equation*}
(\Ad_G^*(v,a))(p,\xi)=(\alpha(a^{-1})^*p,\Ad_A^*(a)\xi-\theta_{\alpha(a^{-1})^*p}(v))
\end{equation*}
for all $v\in\Vc$, $a\in A$, $p\in\Vc^*$, and $\xi\in\ag^*$. 
(See \cite[Rem.~4.15]{BB21a}  and the references therein.)

For arbitrary $x\in \ag$ we have 
\begin{align*}
\langle \theta_{\alpha(a^{-1})^*p}(v),x\rangle 
&=-\langle \alpha(a^{-1})^*p,\de\alpha(x)v\rangle \\
&=-\langle p,\alpha(a^{-1})\de\alpha(x)\alpha(a)\alpha(a^{-1})v\rangle \\
&=-\langle p,\de\alpha(\Ad_A(a^{-1})x)\alpha(a^{-1})v\rangle \\
&=\langle \theta_p(\alpha(a^{-1})v),\Ad_A(a^{-1})x\rangle 
\end{align*}
where we have denoted by $\langle\cdot,\cdot\rangle\colon\ag^*\times\ag\to\RR$ the duality pairing for $\ag$ as well. 
Consequently 
\begin{equation*}
\theta_{\alpha(a^{-1})^*p}(v)=\theta_p(\alpha(a^{-1})v)\circ\Ad_A(a^{-1}). 
\end{equation*}
If $A$ is abelian, then $\Ad_A(a^{-1})=\id_\ag$ and $\Ad_A^*(a)=\id_{\ag^*}$, hence we obtain 
\begin{equation*}
(\Ad_G^*(v,a))(p,\xi)
=(\alpha(a^{-1})^*p,\xi-\theta_{\alpha(a^{-1})^*p}(v))
=(\alpha(a^{-1})^*p,\xi-\theta_p(\alpha(a^{-1})v))
\end{equation*}
and therefore 
\begin{align*}
G(p,\xi)&=(\Ker\theta_p)\rtimes_\alpha A(p),\\
\Ad_G^*(G)(p,\xi) & =\alpha^*(A)p\times(\xi+\theta_p(\Vc)),
\end{align*}
where $\alpha^*\colon A\to \GL(\Vc^*)$, $\alpha^*(a):=\alpha(a^{-1})^*$, 
and $A(p):=\{a\in A\mid \alpha^*(a)p=p\}$. 

We now note the following consequences of the above remarks, in the case where $A$ is abelian:
\begin{enumerate}[{\rm(i)}]
	\item 
	The coadjoint orbit $\Ad_G^*(G)(p,\xi)\subseteq\gg^*$ is locally closed if and only if the orbit $\alpha^*(A)p\subseteq\Vc^*$ is locally closed. 
	\item We have $G(p,\xi)_\1=(\Ker\theta_p)\rtimes_\alpha A(p)_\1$. 
	\item We have 
	$$\overline{\Ad_G^*(G)(p,\xi)}=\overline{\Ad_G^*(G)(q,\eta)}
	\iff\begin{cases}
	\overline{\alpha^*(A)p}=\overline{\alpha^*(A)q}, \\
	\xi-\eta\in\theta_p(\Vc). 
	\end{cases}$$
	\item The coadjoint quasi-orbit of $(p,\xi)$ in $(\gg^*/G)^\sim$ is open in $\gg^*$ if and only if the quasi-orbit of $p$ in $(\Vc^*/A)^\sim$ is open in $\Vc^*$ and $\theta_p(\Vc)=\ag^*$. 
	\item If $A(p)=\{\1\}$, then $G(p^{-1})$ is connected, hence $\tau\vert_{\Bun(\Oc)}\colon\Bun(\Oc)\to\Oc$ is bijective, where $\Oc$ is the coadjoint quasi-orbit of $(p,\xi)$ in $(\gg^*/G)^\sim$. 
	\end{enumerate}

Let us now consider some specific cases of semidirect products of abelian Lie groups.

\begin{example}
\label{ax+b_complex}
\normalfont
Let $\gg$ be the real Lie algebra defined by a basis $X_1,X_2,X_3,X_4$ satisfying the commutation relations 
$$[X_3,X_1]=X_1,\ [X_3,X_2]=X_2,\ [X_4,X_1]=X_2,\ [X_4,X_2]=-X_1.$$
If we define the complex Lie algebra structure on $\CC^2$ by 
\begin{equation}
\label{ax+b_complex_eq1}
[(w,z),(w',z')]:=(wz'-zw',0)\text{ for all }(w,z),(w',z')\in\CC^2
\end{equation}
then the $\RR$-linear isomorphism 
$$\Psi\colon \gg\to\CC^2,\quad b_1X_1+b_1X_2+b_3X_3+b_4X_4\mapsto (b_1+\ie b_2,b_3+\ie b_4)$$
is an isomorphism of real Lie algebras. 
Therefore we will write $\gg=\CC^2$ with the above Lie bracket~\eqref{ax+b_complex_eq1} (regarded however as a real Lie algebra). 
We also have 
$$\dg=[\gg,\gg]=\CC\times\{0\}.$$
A solvable Lie group $G$ whose Lie algebra is isomorphic to $\gg$ is  
$G:=(\CC^2,\cdot)$ with its group operations given by 
$$(b,a)\cdot (b',a'):=(b+\ee^{a}b',a+a')\text{ and }(b,a)^{-1}:=(-\ee^{-a}b,-a)$$ 
for all $a,a',b,b'\in\CC$.
The centre of the group $G$ is $Z_G=\{0\}\times2\pi\ie\ZZ$.

The coadjoint action of $G$ is given by 
$$\Ad_G((b,a)^{-1})\colon \gg^*\to\gg^*,\quad (\Ad^*_G((b,a)^{-1}))(w',z')=(\overline{\ee^{a}}w',-\overline{b}w'+z'), $$
where we have identified $\gg$ to its real dual space $\gg^*$ via the duality pairing 
$$\gg\times\gg\to\RR, \quad \langle (w,z), (w',z')\rangle:=\Re(\overline{w}w'+\overline{z}z').$$
This shows that the set $\dg^\perp=\{0\}\times\CC$ is the set of fixed points of the coadjoint action of $G$, 
while the complement of this set, to be denoted 
$$\Oc_0:=\CC^\times\times\CC$$
is an open coadjoint orbit of $G$. 
We have  
$$\Bun(\gg^*)=\Bun(\Oc_0)\sqcup\dg^\perp
\text{ and }\RelS=\Bun(\Oc_0)/G \sqcup\dg^\perp.$$ 
Here $\Bun(\Oc_0)/G$ is a compact and open set, homeomorphic with a torus $\TT$. 
Therefore, Theorem~\ref{homeo}  shows that
 $$\Prim(G) =  S_1  \cup S_2 $$
 where $S_1$ is a dense, open and compact subset of $\Prim(G)$, homeomorphic with $\TT$, while 
 $S_2= \{0\}\times \CC$. 
\end{example}

In Example~\ref{codim3} below, we consider a 7-dimensional Lie group that was briefly mentioned in \cite[Subsect. 3.11]{Ros78}, 
and we will see in Example~\ref{codim3bis} that this group is only the first in an infinite sequence of solvable Lie groups that have square-integrable representations which are not type~$\I$.

\begin{example}\label{codim3}
\normalfont
Let $A=(\RR^3,+)$, $N=(\CC^2,+)$, $\theta\in\RR\setminus\QQ$, and 
$$\alpha\colon A\to\Aut(N), \quad 
\alpha(r,s,t)=
\begin{pmatrix}
\ee^{r+\ie t} & 0 \\0 & \ee^{s+\ie \theta t}
\end{pmatrix}.$$
Then the semidirect product $G:=N\rtimes_\alpha A$ is a solvable Lie group with trivial centre, with its nilradical $N$ satisfying $\dim(G/N)=3$, 
and we show in Example~\ref{codim3bis} in a more general setting that there exists a coadjoint quasi-orbit $\Oc\in(\gg^*/G)^\sim$ with the following properties: 
\begin{itemize}
	\item $\Oc\subseteq\gg^*$ is a dense open subset; 
	\item $\Oc\not\in\gg^*/G$;  
	\item $G(\xi)=\{\1\}$ for every $\xi\in\Oc$. 
\end{itemize}
Then the condition $(\Bun(\Oc)/G)^\approx=\{\Bun(\Oc)\}$ is trivially satisfied. 
In particular, by Theorem~\ref{open_th}, the quasi-equivalence class $\ell(\O)\in \stackrel{\frown}{G}_\nor$ is square integrable for $\O=\Bun(\Oc)$. 
On the other hand, by Theorem~\ref{F_gen}, $\ell(\O)$ is not type~$\I$ since $\Oc\not\in\gg^*/G$. 
\end{example}

\begin{example}[Generalization of Example~\ref{codim3}]
	\label{codim3bis}
	\normalfont 
We consider the abelian Lie group $A:=(\RR^k,+)$ and the real vector space $\Vc:=\CC^n$. 
Select $\xi_1,\dots,\xi_n,\eta_1,\dots,\eta_n\in\RR^k$ satisfying the following conditions: 
\begin{enumerate}[{\rm(a)}]
	\item\label{item_a} We have $\spa(\{\xi_j\mid j=1,\dots,n\}\cup\{\eta_j\mid j=1,\dots,n\})=\RR^k$, hence $2n\ge k$.
	\item\label{item_b} The vectors $\eta_1,\dots,\eta_n\in\RR^k$ are linearly independent. 
	\item\label{item_c} The mapping 
	$\Psi\colon \{\eta_1,\dots,\eta_n\}^\perp\to\TT^n$, $\Psi(a):=(\ee^{\ie\langle\xi_1,a\rangle},\dots,\ee^{\ie\langle\xi_n,a\rangle})$, 
	is injective, and its image is dense in, and different from, the torus $\TT^n$. 
\end{enumerate}
We note that \eqref{item_c} implies $n\ge 2$ and $k-n\mathop{=}\limits^{\eqref{item_b}}\dim\{\eta_1,\dots,\eta_n\}^\perp\ge 1$,  
hence $k\ge 3$. 
(We will also see below that these conditions imply $2n>k$.) 
We now define 
$\alpha\colon \RR^k\to\GL(n,\CC)\subseteq\GL(\Vc)$ by 
$$\alpha(a)=\begin{pmatrix}
\ee^{\langle\eta_1,a\rangle+\ie\langle\xi_1,a\rangle} & & 0 \\
 & \ddots & \\
0 & &\ee^{\langle\eta_n,a\rangle+\ie\langle\xi_n,a\rangle}
\end{pmatrix}$$
where we have denoted by $\langle\cdot,\cdot\rangle\colon\RR^k\times\RR^k\to\RR$ the canonical real scalar product. 
If we form the corresponding semidirect product $G:=\Vc\rtimes_\alpha A$, 
then we claim that $G$ is a solvable Lie group which has a  coadjoint quasi-orbit $\Oc\in(\gg^*/G)^\sim$ with the following properties: 
\begin{enumerate}[{\rm(i)}]
	\item\label{item_i} $\Oc\subseteq\gg^*$ is a dense open subset; 
	\item\label{item_ii} $\Oc\not\in\gg^*/G$; 
	\item\label{item_iii} $G(p,\xi)$ is abelian and connected for every $(p,\xi)\in\Oc\subseteq\Vc^*\times\ag^*=\gg^*$. 
\end{enumerate}
In particular, the mapping $\tau\vert_{\Bun(\Oc)}\colon\Bun(\Oc)\to\Oc$ is bijective and, for $\O:=\Bun(\Oc)$,  we have $\O\in(\Bun(\Oc)/G)^\approx$, 
and its corresponding quasi-equivalence class $\ell(\O)\in \stackrel{\frown}{G}_\nor$ is square integrable by Theorem~\ref{open_th} and not type~$\I$ by Theorem~\ref{F_gen}. 

To prove the existence of $\Oc\in(\gg^*/G)^\sim$ with the aforementioned properties, 
we first note the equality 
$$\Ker\de\alpha=\{\xi_j\mid j=1,\dots,k\}^\perp\cap\{\eta_j\mid j=1,\dots,k\}^\perp$$
hence condition~\eqref{item_a} is equivalent to the fact that the centre of $G$ is trivial. 

We now denote by $(\cdot\mid\cdot)\colon\CC^n\times\CC^n\to\CC$ the canonical complex scalar product, antilinear in its first variable, and let $\langle\cdot,\cdot\rangle:=\Re(\cdot\mid\cdot)$, which is a real scalar product on~$\CC^n$. 

We use the above real scalar products in order to perform the identifications $\Vc^*=\Vc$ and $\ag^*=\ag$. 
Then for every 
$p\in\Vc^*=\CC^n$ the operator $\theta_p\colon \Vc=\CC^n\to\ag^*=\RR^k$ satisfies for all $a\in\ag=\RR^k$
\allowdisplaybreaks
\begin{align*}
-\langle\theta_p(v),a\rangle 
&=\langle p,\de\alpha(a)v\rangle 
=\Re (p\mid\de\alpha(a)v)  \\
&=\sum_{r=1}^n\Re(\overline{p_r}(\langle\eta_r,a\rangle+\ie\langle\xi_r,a\rangle) v_r) \\
&=\sum_{r=1}^n\langle\eta_r,a\rangle\Re(\overline{p_r} v_r)
-\langle\xi_r,a\rangle\Im(\overline{p_r} v_r)
\end{align*}
where we have written $p=(p_1,\dots,p_n),v=(v_1,\dots,v_n)\in\CC^n$. 
Thus 
\begin{equation}\label{thetap}
\theta_p(v)=\sum_{r=1}^n \Im(\overline{p_r} v_r)\xi_r-\Re(\overline{p_r} v_r)\eta_r.
\end{equation}
If moreover 
$p\in(\CC^\times)^n$, then 
\begin{align*}
A(p)& =\{a\in\RR^k\mid\alpha(a)=\1\} \\
&=\{a\in\RR^k\mid\langle\xi_j,a\rangle\in2\pi\ie\ZZ\text{ and }\langle\eta_j,a\rangle=0\text{ for }j=1,\dots,k\} \\
&=\{0\}
\end{align*}
where the last equality follows by hypothesis~\eqref{item_c}. 
We now fix $p=(p_1,\dots,p_n)\in(\CC^\times)^n$ and $\xi\in\RR^k$ 
and let $\Oc\in(\gg^*/G)^\sim$ be the coadjoint quasi-orbit of $(p,\xi)\in\CC^n\times\RR^k=\gg^*$. 
We proceed to show that $\Oc=(\CC^\times)^n\times\RR^k$ and $\Oc$ has properties \eqref{item_i}--\eqref{item_iii}. 

In fact, \eqref{item_iii} holds since 
$G(p,\xi)=(\Ker\theta_p)\rtimes A(p)=(\Ker\theta_p)\times\{0\}$ 
is isomorphic to $\Ker\theta_p$, which is a linear subspace of $\Vc$, hence  connected. 
 Let us denote $E:=\spa\{\eta_1,\dots,\eta_n\}\subseteq\RR^k$.
Hypothesis~\eqref{item_b} implies that $\alpha\vert_E$ is an isomorphism of $(E,+)$ onto the group of all diagonal matrices in $\GL(n,\CC)$ whose diagonal entries are strictly positive. 
On the other hand, hypothesis~\eqref{item_c} implies that $\alpha(E^\perp)p$ is dense in, and different from, the torus $(\vert p_1\vert\TT)\times\cdots\times(\vert p_n\vert\TT)$. 
It is then straightforward to check that $\alpha(A)p$ is a dense open subset of $\CC^n$ which is not locally closed. 
On the other hand, it follows by hypothesis~\eqref{item_a} along with $p\in(\CC^\times)^n$ and \eqref{thetap} that $\theta_p(\Vc)=\ag^*$, 
hence $\Oc=(\CC^\times)^n\times\RR^k$ and  \eqref{item_i} holds true. 
Finally, we prove~\eqref{item_ii} by contradiction. 
Assuming $\Oc\in\gg^*/G$, it follows that $\Oc$ is the coadjoint orbit of $(p,\xi)\in\Oc$. 
Then, since $\Oc\subseteq\gg^*$ is open, we obtain $G(p,\xi)=\{\1\}$, 
hence $\Ker\theta_p=\{0\}$. 
Since we have seen above that $\theta_p\colon \CC^n\to\RR^k$ is surjective, we then obtain $2n= k$. 
Then, by hypothesis~\eqref{item_a}, the vectors $\xi_1,\dots,\xi_n$ must be linearly independent, which implies that the mapping $\Psi$ from hypothesis~\eqref{item_c} is surjective, and this is a contradiction. 
Consequently \eqref{item_ii} holds true, and we are done. 
\end{example}

\subsection*{Acknowledgment}
We are grateful to Karl-Hermann Neeb for useful remarks on 1-connected Lie groups, 
and to Jordy van Velthoven for pointing out an inaccuracy in an earlier version of our paper, concerning the centre of the group~$G$ in Example~\ref{ax+b_complex}. 
We also thank the Referee for suggestions that improved our paper at many places.

\end{document}